\theoremstyle{remark}
\newtheorem{theorem}{Theorem}[section]
\newtheorem*{theorem*}{Theorem}
\newtheorem*{proposition*}{Proposition}
\newtheorem{lemma}[theorem]{Lemma}
\newtheorem{proposition}[theorem]{Proposition}
\theoremstyle{definition}
\newtheorem{definition}[theorem]{Definition}
\newcommand\numberthis{\addtocounter{equation}{1}\tag{\theequation}}
\renewcommand{\d}{\mathrm{d}}
\begin{document}

\title[Kinetic and diffusive behaviour for a Rayleigh gas]{Derivation of kinetic and diffusion equations from a hard-sphere Rayleigh gas using collision trees and semigroups}
\author{Karsten Matthies \and  Theodora Syntaka}
\address{University of Bath,
Department of Mathematical Sciences,
Bath,
BA2 7AY,
United Kingdom}
\email{k.matthies@bath.ac.uk; ts2291@bath.ac.uk}

\begin{abstract}
We will revisit the classical questions of understanding the statistics of various deterministic dynamics of $N$ hard spheres of diameter $\varepsilon$ with random initial data in the Boltzmann-Grad scaling as $\varepsilon$ tends to zero and $N$ tends to infinity.   The convergence of the empiric particle dynamics to the Boltzmann-type dynamics is shown using semigroup methods to describe probability measures on collision trees associated to physical trajectories in the case of a Rayleigh gas. As an application we derive the diffusion equation by a further rescaling.
\end{abstract}

\maketitle


\pagenumbering{arabic}


\section{Introduction}
The derivation of various continuum equations from a discrete deterministic system of particles is of major interest. This is an area of research in mathematical physics related to Hilbert's Sixth Problem. There are two major parts of this programme. Firstly, there is the direct derivation of a continuum equation from particle models as a scaling limit for many small particles. Secondly, these derived continuum equations can be rescaled again to derive further continuum equations as effective descriptions. In kinetic theory, this problem has been approached in two steps using the Boltzmann equation as the intermediate, mesoscopic description. The first one is to derive kinetic equations, such as the Boltzmann equation, from a system of particles and the second is to derive further continuum equations, such as the Navier-Stokes, Euler and Heat equations, from the Boltzmann equation.  For some review of the substantial literature see e.g. \cite{Slemrod2013, Gallagher2019}. The general idea can be summarised in the following figure.

\begin{figure}[h!]

\begin{center}
\begin{tikzpicture}[
inner sep=5mm,
roundnode/.style={circle, draw=green!60, fill=green!5, very thick, minimum size=1cm},
squarednode/.style={rectangle, draw=red!60, fill=yellow!5, very thick, minimum size=1cm},
title/.style={font=\LARGE\scshape,node distance=16pt, text=black!40, inner sep=1cm},
]

 \node[]      (maintopic)                              {};
\node[squarednode,align = center]        (uppercircle)       [above=of maintopic] {\textbf{Microscopic description} \\
System of $N$ particles \\ of diameter $\varepsilon$ };
\node[squarednode,align = center ]      (rightsquare)       [right=of maintopic] {\textbf{Mesoscopic description} \\Kinetic theory \\ e.g. Boltzmann equation};
\node[ squarednode, align = center]        (lowercircle)       [below=of maintopic] {\textbf{Macroscopic description}\\Continuous equations \\ e.g. Navier-Stokes, Heat equation };

\draw[->, align = center] (uppercircle.south) -- node[left] {$N \varepsilon ^{d-1}\gg 1$,\\  $N \varepsilon ^{d}\ll 1$ } (lowercircle.north);
\draw[->, align = center] (uppercircle.east) --  node[right] { Boltzmann-Grad limit \\$N \gg 1, N \varepsilon ^{d-1} = c$} (rightsquare.north);
\draw[->,align = center] (rightsquare.south) -- node[right] { Hydrodynamic limit \\ $ c \gg1$}  (lowercircle.east);
\end{tikzpicture}
\end{center}

\end{figure}

The equations have very different dynamical behaviours on the different scales, e.g. the system of particles has a time-reversible dynamics  while  the kinetic equations are irreversible; also the phase spaces are substantially different such that the right way to compare them becomes part of the question of how to make sense of the diagram.

\subsection*{Particle dynamics}
We start by describing the particle dynamics. While Hamiltonian dynamics for a large number of particles $N$ can be considered, most research focuses on particle models with short range  interactions. The extreme case  are hard-sphere models, i.e.~solid particles of size $\varepsilon$ which undergo elastic collisions. A wider, relevant class are systems with short range potentials where the particles carry a force that affects only nearby particles, up to some distance proportional to $\varepsilon$. Our current work focuses on the hard-sphere model for mathematical and presentational simplicity. Suppose we have $N \in \mathds{N}$ 
particles identically and independently distributed that are moving in straight lines until they undergo a collision with another particle.
Then the equations of motion are given by
\begin{align*}
\frac{d x_i(t)}{dt} = v_i, \quad  \frac{d v_i(t)}{dt} = 0,
\end{align*}
provided that $|x_j(t) - x_i(t)| >\varepsilon$, where $x_i (t)$ is the position and $v_i(t)$ is the velocity of particle $i$, for $1\le i \neq j\le N$ and $t \ge 0$. Here, $\varepsilon$ is the diameter of the particles.

Else, there exists a $k$ such that $|x_i (t) - x_k(t)| = \varepsilon$. That is to say, particles $k$ and $j$ experience an elastic collision at time $t$. Then if $v_i( t^-)$ and $v_k( t^-)$ are the velocities before the collision and $v_i (t)$ and $v_k(t)$ the velocities after the collision we have
\begin{align}
v_i (t) &= v_i(t^-) - \nu \cdot (v_i(t^-) - v_k(t^-)) \nu, \nonumber \\
v_k (t) &= v_k(t^-) + \nu \cdot (v_i(t^-) - v_k(t^-)) \nu, \label{eqn:hardsphere}
\end{align}
where the collision parameter $\nu \in \mathbb{S}^2$ is denoted by $$\nu := \frac{ x_i(t) - x_k(t)}{| x_i(t) -x_k(t) |}.$$
These dynamics preserve overall kinetic energy and momentum.

\subsection*{The Boltzmann equation}

The Boltzmann equation is the paradigm of a kinetic equation. It describes the statistical behaviour of a thermodynamic system not in a state of equilibrium, that is, it describes the evolution of a distribution of an idealised dilute gas. The Boltzmann equation is given by
\begin{align}\label{eqn:1.1.}
\begin{cases}
\partial_t f_t(x,v) + v \cdot \nabla_x f_t(x,v) - \nabla_x F \cdot \nabla _v f_t &= c Q[f_t] (x,v), \\
 \quad \quad \quad \quad \quad \quad \quad \  \quad  \quad  \quad  \quad   \quad \ \ \ f_{t=0}(x,v) & = f_0(x,v),
\end{cases}
\end{align}
where $f: [0,T] \times \mathcal{U} \to \mathds{R}$ is the distribution of the gas at position $x$ and velocity $v$ at time $t$, with $\mathcal{U} = \mathds{T}^3 \times \mathds{R}^3$ be the phase space and $f_0$ is a given initial distribution. Here $c$ is a parameter which is the inverse of the mean free path of the microscopic particles and it represents the rate of collisions. The collision operator $Q$ represents the effect of interactions between the particles and $F$ is a potential of an external force. We will suppose that the potential $F = 0$.

The Boltzmann collision operator $Q[f_t]$ is quadratic in $f$ which means that is ruled by binary collisions and acts only in $v$. That is to say, the collisions are pointwise in $t$ and $x$. The collision operator is defined by
\begin{align*}
Q[f_t](x,v)=  \int_{\mathbb{S}^2} \int_{\mathds{R}^3} [  f_t(v') f_t(\bar{v}') - f_t(v) f_t(\bar{v}) ]  b(v-\bar{v}, \nu) \ \d\bar{v}\ \d{\nu},
\end{align*}
where the velocities $v'$ and $\bar{v}'$ are defined by $v'=v+\nu \cdot (\bar{v} -v)\nu$ and $\bar{v}'=\bar{v}-\nu \cdot (\bar{v} -v)\nu$. 
Here, $b$ is the collision cross-section which depends on modelling assumptions about the particle interaction, see \cite{cerci88}. For hard sphere dynamics, we have the positive part of the inner product:
\[b(v-\bar{v}, \nu) = [(v-\bar{v}) \cdot \nu] _+.  \]
One can show that ($v,\bar{v}, v', \bar{v}'$) provides the family of all solutions to the system of $3+1$ equations
\begin{align*}
v + \bar{v} &= v' + \bar{v}' \\
|v|^2 + |\bar{v}|^2 &= |v'|^2 + |\bar{v}'|^2, \numberthis \label{1.2.}
\end{align*}
which, at the kinetic level, express the fact that collisions are elastic and thus conserve momentum and energy.

There is a unique family of equilibrium distributions $M_{\beta}(v) := (\frac{\beta}{2\pi})^{\frac{3}{2}} \exp (- \frac{\beta}{2} |v|^2),$ for \eqref{1.1},  parameterised by the inverse temperature $\beta >0$, again see  \cite{cerci88} for details.

The rigorous justification of the full Boltzmann equation from a deterministic particle model of gas is a topic of current research.
The $f_0 \in L^1(\mathds{T}^3 \times \mathds{R}^3)$ is the initial distribution of the  particles. For the number of particles $N$ and their diameter we consider the Boltzmann-Grad scaling, $$N \varepsilon^2 = c.$$

We study the limiting behaviour of this model as the number of particles tends to infinity. As we increase the number of particles we decrease their radii such that the expected number of collisions in a given time remains constant. This is the Boltzmann-Grad scaling. The general  goal is to prove that   the probability of finding a particle at a given position converges to the solution of the Boltzmann equation as the number of particles tends to infinity.

The first result in this area was due to Lanford \cite{lanford75}. In this result, Lanford managed to prove convergence from a system of particles to the Boltzmann equation, in the case of hard spheres. King \cite{King1975} was able to prove the result of Lanford but with more general potential. This convergence is valid for short times, where the time of validity depends on the free flight time. For larger time interval convergence see \cite{Illner1986, Illner1989} who managed to obtain the convergence globally in time if the positions are in $\mathds{R}^d$ and the initial density is sufficiently small. The general results were substantially reworked in \cite{Gallagher2013, pul13}. Building on this, it is possible to understand fluctuations around the equilibrium solutions. Bodineau, Gallagher and Saint-Raymond in \cite{MR3455156} give a rigorous derivation of the Brownian motion as the limit of a deterministic system of hard-spheres, using the linear Boltzmann equation as a mesoscopic description. This result is valid for arbitrary large times and for initial distribution which is a perturbation of an equilibrium state with respect to the position of a tagged particle. The convergence rate of the distribution of the tagged particle to the solution of the linear Boltzmann equation is of the order of $(\log \log N)^{-1}$. An extension to derive the Ornstein-Uhlenbeck is given in \cite{Bodineau2018}. A substantial understanding of fluctuations around the equilibrium of the Boltzmann equation is provided in \cite{Bodineau2023a, Bodineau2023}. While there has been recent, substantial progress in the parallel problem of deriving the kinetic wave equation for the interaction of waves within nonlinear Schr\"odinger equations
\cite{Lukkarinen2011,Deng2021,Deng2023,Deng2023a}, the long-term derivation of the Boltzmann equation remains open.

\subsection*{Tagged particle models}

One option to simplify the dynamics --both on a particle level and on the level of the continuum equations--  is to split the system of particles into two kinds of particles, the background and the tagged particles. The tagged particle is single and interacts among a system of the background particles which are assumed not to interact among themselves.  This can be motivated e.g. for systems that are in equilibrium.

If the background particles are of infinite relative mass to the tagged particle and the background particles are fixed, then this model is known as the Lorentz gas model. Long-term convergence was shown in \cite{spohn78}. For recent progress  about random Lorentz gas see the paper of Lutsko and Toth, \cite{Lutsko2020}. In this work, they prove the invariance principle for a random Lorentz-gas particle in three space dimensions under the Boltzmann-Grad limit and simultaneous diffusion scaling. They do this by using a coupling of the mechanical trajectory and some controls on the efficiency of this coupling.

A Rayleigh gas is related with the Lorentz gas, but the background particles are no longer of infinite mass. If the background particles are of equal mass with the tagged particle and the background particles interact only with the tagged particle and not with each other,  then this model is known as the Rayleigh gas. We consider a tagged particle with initial distribution $f_0$ and background particles that are distributed according to $g_0$. The interaction between the tagged particle and the background follows \eqref{eqn:hardsphere}. There is no interaction among the background particles.
In an appropriate scaling limit this leads to a linear Boltzmann equation.
For hard-sphere particle dynamics, it is given by
\begin{align}\label{1.1}
\begin{cases}
\partial_t f_t(x,v) + v \cdot \nabla_x f_t(x,v) &= c Q[f_t] (x,v), \\
 \quad  \quad  \quad  \quad  \quad   \quad \ \ \ f_{t=0}(x,v) & = f_0(x,v).
\end{cases}
\end{align}
where $c$ is a parameter which is the inverse of the mean free path of the microscopic particles and it represents the rate of collisions.
The collision operator $Q$ is a linear operator describes the interactions of the particles with the surrounding medium and it is defined by $Q:= Q^+ - Q^-$, where the gain term $Q^+$ is given by
\begin{align*}
Q^+[f_t](x,v)=  \int_{\mathbb{S}^2} \int_{\mathds{R}^3}   f_t(x,v') g_0(\bar{v}') [(v-\bar{v})\cdot \nu]_+ \ \d \bar{v}  \ \d {\nu},
\end{align*}
where the pre-collisional velocities $v'$ and $\bar{v}'$ are given by $v'=v+\nu \cdot (\bar{v} -v)\nu$ and $\bar{v}'=\bar{v}-\nu \cdot (\bar{v} -v)\nu$ respectively, $[y]_+ := \max \{ y , 0\}$ and the loss term $Q^-$ is given by
\begin{align*}
Q^-[f_t](x,v)=  f_t(x,v) \int_{\mathbb{S}^2} \int_{\mathds{R}^3} g_0(\bar{v}) [(v-\bar{v})\cdot \nu]_+ \ \d \bar{v}\  \d {\nu}.
\end{align*}
The derivation of \eqref{1.1} is given in \cite{Matthies2018} for finite, fixed times without any error estimates. For variants and further details see also \cite{Stone2017, Matthies2018a} and for a related model \cite{Nota2019}. Various scaling limits can be considered for systems with long-range potential \cite{Nota2021}.

In \cite{fougères2024derivation}, Foug\'eres also considers a Rayleigh gas and shows quantitative rates of convergence obtaining better rates of convergence than in the full case in \cite{MR3455156}, which is of the order $ \exp ( -c_{\beta}  | \log N|^{1-\alpha} )$, $\forall \alpha >0$, with $N$ be the number of particles for times of the order $(\log|c_{\beta} \log \varepsilon| )^{\frac{1}{2}-\alpha}$ based on a careful analysis of the BBGKY hierarchy. Using different methods, we show in the present paper, that the convergence rate of the distribution of the tagged particle to the solution of the linear Boltzmann equation is of the order $\varepsilon^{\alpha}$, with $\alpha \in (0,\frac{11}{52})$ for time scales that are proportional to some negative power of $\varepsilon$.
\subsection*{Statement of main results}

In this paper we are providing two extensions to \cite{Matthies2018}: We are extending the time-scale of the derivation with quantitative error estimates, and secondly, we describe the long-term diffusive behaviour.

We are going to prove the following two theorems. The first one is a theorem for the derivation of the linear Boltzmann equation from a Rayleigh gas particle system with quantitative estimates on  a diverging time scale.

\definition The probability densities $f_0 \in L^1 (\mathds{T}^3 \times \mathds{R}^3)$ and $g_0 \in L^1(\mathds{R}^3)$ are admissible if
\begin{align*}
\int_{\mathds{T}^3 \times \mathds{R}^3} f_0(x,v) (1+|v|^2) \ \d x\,  \d v < \infty,
\int_{\mathds{R}^3} g_0(v) (1+|v|^3) \ \d v < \infty,
 \operatorname*{ess\,sup}_{v \in \mathds{R}^3} g_0(v) (1+|v|^5) &< \infty.
\end{align*}

\begin{theorem}\label{thm:1.1.} Let $f_0$ the initial distribution of the tagged particle and $g_0$ the distribution of the background particles be admissible and let $c_\varepsilon  \geq 1$,
 $T_{\varepsilon}\geq 1$ such that  $
 c^{\frac{84}{103}}T = \varepsilon^{\frac{52\alpha}{103} -\frac{11}{103}}$ for some $\alpha$ with  $0<\alpha < \frac{11}{52}$.
Then the distribution of the tagged particle $\hat{f}_t^N$ at time $0\le t \le T_{\varepsilon}$ converges to the time dependent density $f_t$ in the space $L^1( \mathds{T}^3 \times \mathds{R}^3 )$, in the Boltzmann-Grad limit $N \varepsilon^2 = c$, where $f_t$ satisfies the linear Boltzmann equation $\eqref{1.1}$ for time $T_{\varepsilon}$ which diverging with $N \to \infty$. There exists $\varepsilon_0>0$  such that for every $\varepsilon >0$, with $ \varepsilon < \varepsilon _0 $ such that for any $t\in [0,T_{\varepsilon}]$
the error can be estimated by
\begin{align*}
\| \hat{f}^N_t(x,v) - f_t (x,v) \|_{L^1(\mathds{T}^3 \times \mathds{R}^3)} & \le
C \varepsilon ^{\alpha}.
\end{align*}
\end{theorem}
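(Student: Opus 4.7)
The plan is to represent both $\hat{f}^N_t$ and $f_t$ as one-particle marginals of measures on a common space of \emph{collision trees} attached to the tagged particle, and then bound their $L^1$-distance by the probability of tree events on which the two pictures genuinely differ. To each realisation one associates the ordered list of background particles that the tagged particle has collided with, together with their incoming velocities and impact parameters; because in a Rayleigh gas background particles never collide with each other, this tree fully encodes the dynamics. The hard-sphere flow defines a semigroup $\mathcal{T}^\varepsilon_t$ on weighted $L^1$ tree densities whose projection on the tagged particle recovers $\hat{f}^N_t$, while the linear Boltzmann equation \eqref{1.1} is generated by an idealised tree semigroup $\mathcal{T}^0_t$ in which successive collisions arrive as a Poisson-like process with rate $\int [(v-\bar{v})\cdot\nu]_+\,g_0(\bar{v})\,\d\bar{v}\,\d\nu$. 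The admissibility hypotheses on $f_0$ and $g_0$ supply exactly the moment bounds needed to make these generators bounded in the relevant weighted norms, and hence to exponentiate.

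The key comparison is then a coupling: the measures generated by $\mathcal{T}^\varepsilon_t$ and $\mathcal{T}^0_t$ agree on the set of \emph{good} trees, namely those in which no initial background sphere overlaps the past tube of the tagged particle, no recollision occurs (the tagged particle never revisits a background particle already seen), and no previously-used background particle blocks a prescribed later collision. I would estimate the three bad-event classes separately. Initial overlaps contribute $O(N\varepsilon^3)=O(c\varepsilon)$. A recollision between the collisions labelled $i<j$ forces the relevant background particle into a cylinder of transverse scale $\varepsilon$, giving a geometric contribution of order $\varepsilon VT$ weighted by a higher moment of $g_0$; interference events are of the same type. A union bound over the $O(n^2)$ pairs of nodes in a tree of size $n$ yields a total bad-event probability of order $n^2 V T c\varepsilon$ times moments of $g_0$.

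Finally I would truncate by a maximum number of collisions $n_\varepsilon$ and a maximum tagged velocity $V_\varepsilon$: the truncation tail in $n$ is controlled by a Dyson-type expansion of $\mathcal{T}^0_t$ whose convergence uses the moment bounds on $g_0$, while the velocity tail uses the finite-energy hypothesis on $f_0$. Optimising $n_\varepsilon$ and $V_\varepsilon$ against the bad-event estimate produces a net bound of the form $\varepsilon^\alpha$, and I expect the exact trade-off $c^{84/103}T=\varepsilon^{52\alpha/103-11/103}$ with $\alpha<\tfrac{11}{52}$ to fall out of equating the three competing contributions (overlaps, recollisions, truncation tail in $n$) after inserting the optimal choice of $n_\varepsilon$ and $V_\varepsilon$.

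The main technical obstacle is precisely the diverging time $T_\varepsilon$: more collisions in the tree give more pairs and more opportunities for recollision, so any naive Gronwall-type argument would cost an exponential factor in $T$ and immediately destroy the rate. The delicate step is to exploit the linearity of the Rayleigh tree semigroup — specifically the independence of the background particles from each other and the contractivity of $\mathcal{T}^0_t$ on the weighted tree-density space — to trade this exponential-in-$T$ loss for a polynomial-in-$n_\varepsilon$ growth, so that the optimisation above actually produces a positive power of $\varepsilon$ on the diverging time interval.
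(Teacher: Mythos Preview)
Your high-level architecture matches the paper's: both represent $\hat f^N_t$ and $f_t$ as marginals of measures $\hat P_t$, $P_t$ on a space of collision trees, show these agree on a set of \emph{good} trees, and then bound the bad-tree probability by truncating in tree size $M(\varepsilon)$ and maximal velocity $V(\varepsilon)$ and optimising against geometric recollision/overlap estimates. However, two quantitative ingredients are oversimplified, and without them the stated exponents are out of reach.

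First, your recollision estimate is too crude. A single ``cylinder of transverse scale $\varepsilon$'' bound of order $\varepsilon VT$ per pair is not what the paper obtains: on the torus one must sum over $O((TV)^2)$ periodic images, and the recollision geometry becomes singular as the recollision time $s$ approaches the last collision time $t_j$. The paper therefore introduces two auxiliary parameters --- a window $\eta$ around $t_j$ and a minimal gap $\delta$ between earlier collision times --- and treats the regimes $s\in(t_j,t_j+\eta)$ and $s\in[t_j+\eta,T]$ separately, obtaining a recollision bound of the shape
\[
C M\Bigl[1+\tfrac{\eta MV}{\delta^2}+M\delta\Bigr]\varepsilon (TV)^2 \;+\; C M^2\,TV\,\tfrac{\varepsilon^2}{\eta^{3}}.
\]
The parameters $\eta,\delta$ are then optimised jointly with $M,V$.

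Second, you overlook the growth of the tagged particle's kinetic energy. Background collisions feed energy in, and the paper proves $\int |v|^2 f_t\,\d x\,\d v \le C(1+c^2t^2)$; as a consequence the expected tree size $\mathbb{E}[n(\Phi)](t)$ also grows like $c^2t^2$, and is controlled by Markov's inequality rather than by convergence of a Dyson series (which would fail for diverging $T$). The truncation tails therefore contribute terms $\tfrac{c^2T^2}{M}$ and $\tfrac{Mc^2T^2}{V^2}$. It is the balance of these two tails against the \emph{three} recollision terms above --- a five-term system, in which the overlap term $O(c\varepsilon)$ is lower order and plays no role --- that gives $M=V=c^{5/13}T^{1/52}\varepsilon^{-11/52}$ and hence the relation $c^{84/103}T=\varepsilon^{52\alpha/103-11/103}$ with $\alpha<\tfrac{11}{52}$. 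Balancing only three contributions with your cruder recollision bound would produce different, weaker exponents.
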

The following theorem is about the derivation of the linear heat equation from a Rayleigh gas particle system distributed according to a Maxwellian background.

\begin{theorem}\label{thm2}
Let $f_0 \in L^1(\mathds{T}^3 \times \mathds{R}^3)$ the initial distribution of the tagged particle and consider the background particles are distributed according to a Maxwellian $M_{\beta}$. Let $\rho(\tau,x)$ be the solution of the linear heat equation
\begin{align*}
 \partial_{\tau} \rho - \kappa_{\beta} \Delta _x \rho &=0 \quad \quad \mathrm{in}\ (0,\infty) \times \mathds{T}^3,\\
 \rho (0, \cdot)&= \rho_0 \quad \ \ \mathrm{in}\ \mathds{T}^3,
\end{align*}
where the diffusion parameter $\kappa_{\beta}$ is given by
\begin{equation*}
\kappa_{\beta} := \frac{1}{3} \int_{\mathds{R}^3}
v \mathcal{L}^{-1}v M_{\beta}(v)
\ \d v,
\end{equation*}
where $\mathcal{L}$ is the Linear Boltzmann operator and $\mathcal{L}^{-1}$ is its pseudo-inverse defined on $(\mathrm{Ker} \mathcal{L})^{ \perp}$. Then the distribution of the tagged particle $\hat{f}^N (c t,  x,v)$  converges in $L^1$-norm to $\rho(\tau,x) M_{\beta}(v)$, i.e.
\begin{align*}
 \| \hat{f}^N (c t,  x,v) - \rho(\tau,x) M_{\beta}(v) \|_{L^1 ([0,T] \times \mathds{T}^3 \times \mathds{R}^3)} \to 0,
\end{align*}
in the limit $N \to \infty$, with $c = N \varepsilon^2 \to \infty.$
\end{theorem}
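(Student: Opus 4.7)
The plan is to pass from the particle system to the heat equation via the linear Boltzmann equation as an intermediate step, combining Theorem~\ref{thm:1.1.} with a Hilbert expansion argument for the diffusive limit. Introducing $f_t$, the solution of \eqref{1.1} with $g_0=M_\beta$ and initial datum $f_0$, I decompose
\begin{align*}
\| \hat{f}^N(ct,x,v) - \rho(t,x) M_\beta(v) \|_{L^1} \le \| \hat{f}^N(ct,\cdot) - f_{ct}(\cdot) \|_{L^1} + \| f_{ct}(\cdot) - \rho(t,x) M_\beta(v) \|_{L^1}
\end{align*}
and integrate in $t\in[0,T]$. Theorem~\ref{thm:1.1.} controls the first piece uniformly in $t$ (applied at rescaled time $ct$), while the second is handled by a standard hydrodynamic limit.

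For the hydrodynamic step, set $g^c(t,x,v):=f_{ct}(x,v)$, so that $g^c$ satisfies the classical parabolic scaling
\begin{align*}
\varepsilon_c^2\, \partial_t g^c + \varepsilon_c\, v\cdot\nabla_x g^c = \mathcal{L} g^c, \qquad \varepsilon_c := 1/c.
\end{align*}
A Hilbert expansion $g^c = \rho(t,x) M_\beta(v) + \varepsilon_c g_1 + \varepsilon_c^2 g_2 + \ldots$ gives, at order $\varepsilon_c^0$, $g^{(0)}\in\ker\mathcal{L}=\mathrm{span}(M_\beta)$, i.e.\ $g^{(0)}=\rho M_\beta$; at order $\varepsilon_c$, since $v M_\beta\in(\ker\mathcal{L})^\perp$ by parity, the equation $\mathcal{L} g_1 = v\cdot\nabla_x(\rho M_\beta)$ is uniquely solved in $(\ker\mathcal{L})^\perp$ by $g_1=\mathcal{L}^{-1}(v M_\beta)\cdot\nabla_x\rho$; the Fredholm solvability at order $\varepsilon_c^2$ for $g_2$, obtained by integrating against $1$ in $v$, reduces to $\partial_t\rho=\kappa_\beta\Delta_x\rho$ with
\begin{align*}
\kappa_\beta = \frac{1}{3}\int_{\mathds{R}^3} v\cdot \mathcal{L}^{-1}(v M_\beta)\, \d v,
\end{align*}
where the factor $\tfrac13$ comes from isotropy of $M_\beta$, which forces the second-moment tensor of $\mathcal{L}^{-1}(vM_\beta)$ to be scalar. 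A remainder estimate then uses the spectral gap of $-\mathcal{L}$ on $(\ker\mathcal{L})^\perp$ (available for the hard-sphere linearised Boltzmann operator with Maxwellian background) together with smoothness of the heat flow $\rho\in C^1_t C^2_x$ for $t>0$; a weighted $L^2$ or entropy bound embeds into $L^1$ to give $\|g^c-\rho M_\beta\|_{L^1}\to 0$ on compact time intervals.

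The main obstacle is synchronising the two limits. Theorem~\ref{thm:1.1.} yields the rate $\varepsilon^\alpha$ but only up to the horizon $T_\varepsilon$ satisfying $c^{84/103}T_\varepsilon=\varepsilon^{(52\alpha-11)/103}$. To reach physical time $ct$ for $t\in[0,T]$ we need $cT\le T_\varepsilon$, which translates into $c\le C\,\varepsilon^{(52\alpha-11)/187}$; since $52\alpha<11$ the right-hand side diverges as $\varepsilon\to 0$, so this bound still permits $c=c(\varepsilon)\to\infty$, and one chooses any such schedule growing to infinity slowly enough to send both terms in the triangle inequality to zero. A secondary technical point is that $f_0$ is only assumed in $L^1$, not admissible in the sense of Theorem~\ref{thm:1.1.}: this is bypassed by a density argument, approximating $f_0$ by admissible initial data and exploiting the $L^1$-contraction of the linear Boltzmann semigroup together with the corresponding stability of the empirical distribution to propagate the approximation.
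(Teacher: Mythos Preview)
Your proposal follows the same two-step strategy as the paper: split via the triangle inequality through the linear Boltzmann solution $f_{ct}$, invoke Theorem~\ref{thm:1.1.} for the particle-to-Boltzmann step, and use a diffusive limit for the Boltzmann-to-heat step. The paper does not carry out the Hilbert expansion itself but simply quotes the $L^\infty$ convergence result of \cite[Section~6]{MR3455156} and then passes to $L^1$ by splitting $\mathds{R}^3_v$ into $B_R$ and its complement; your sketch of the Hilbert expansion is precisely the content of that cited result, so the route is the same with more detail filled in. Your explicit computation of the admissible range of $c=c(\varepsilon)$ needed to keep $cT\le T_\varepsilon$, and your observation that the bare hypothesis $f_0\in L^1$ is weaker than the admissibility required by Theorem~\ref{thm:1.1.}, are both points the paper leaves implicit (in fact the paper tacitly takes $f_0=\rho_0 M_\beta$, which is admissible when $\rho_0$ is bounded), so your version is if anything more careful on these technicalities.
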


\subsection*{Plan of paper}
In the next section we provide an overview of the main ingredients for the proofs of both theorems. This includes an introduction to collision trees and the evolution of probability measures on them as introduced in \cite{matt12}. In section 3 we prove Theorem \ref{thm:1.1.} by extending \cite{Matthies2018} with quantitative error estimates.
 The proof of Theorem \ref{thm2} is given in section 4. We adapt results from
 \cite{MR3455156}  and combine them with Theorem \ref{thm:1.1.}.

\section{Collision Trees, semigroups and related approaches}

The standard method to the representation of the dynamics of the particle system that makes a connection to the Boltzmann equation is the use of the BBGKY hierarchy. Substantial details about  BBGKY hierarchy method can be found in \cite{cerci88, cercignani94}.
 The $N$ particle distribution resulting from hard sphere dynamics at time $t$ is denoted by $f_N(t)$. Then, away from collisions, $f_N$ satisfies the Liouville equation,
\[ \partial_t f_N(t) + v \cdot \partial_x f_N(t)= 0. \]
By integrating and using a weak form, this can be represented,  away from collisions,
\begin{equation} \label{eq-bbgky}
 \partial_t f_N^{(s)}(t) + \sum_{1 \leq i \leq s } v_i \cdot \nabla _{x_i} f_N(t) = C_{s,s+1}f_N ^{(s+1)}(t),
\end{equation}
for $s=1,\dots,N$ where $f_N^{(s)}$ is the $s$ particle marginal and where $C_{s,s+1}$ is the effect on the distribution of the first $s$ particles by a collision with another particle given by
\[\mathcal C_{s,s+1} f^{(s+1)}(t,Z_s):= (N-s) \varepsilon^{d-1} \sum_{i=1}^s \int_{S^{d-1} \times \mathbb{R}^d} \nu \cdot (v_{s+1}-v_i)
 f_N^{(s+1)}(t,Z_s,x_i+ \varepsilon  \nu, v_{s+1}) \, \mathrm{d}\nu \, \mathrm{d}v_{s+1}.\]
 The system of $N$ equations \eqref{eq-bbgky} is known as the BBGKY hierarchy. If the initial distribution of the $N$ particles introduces correlations as,
$$f_N(0)=\frac{1}{\mathcal{Z}_N} 1_{\mbox{no}} f_0^{\otimes N},$$
where $1_{\mbox{no}}$ conditions on no initial overlap and $\mathcal{Z}_N$ is a normalising constant, then the initial distribution of $f_N^{(s)}$ is given by,
\[ f_N^{s}(0,Z_s) = \int f_N(0,Z_n) \, \mathrm{d}z_{s+1} \dots \, \mathrm{d}z_{N}.  \]
The system \eqref{eq-bbgky} is typically solved by some Duhamel principle and estimates often use graphical representations of iterated integral expressions of $\mathcal C_{s,s+1}$, see \cite{Gallagher2013} for simple trees and \cite{Bodineau2023a} for much richer structures.

Cluster expansions can deal with the physical trajectories for  hard-sphere flows in  \cite{Bodineau2022}. There, a cluster at time $t$ consists of all dynamical interactions. Two particles interact dynamically on $[0,t ]$ if they collide on that time interval.
Given a set of particle trajectories, a graph of dynamical interactions is built by adding an edge $\{i, j\}$ if two particles $i$ and $j$ collide. A cluster
of particle trajectories have a connected graph of dynamical interactions, and which do not interact dynamically with particles outside that set.

Clusters evolve forward in all particles and typically nearly all particles will be in the same cluster after a short time. In this paper we follow an approach developed in
\cite{matthies10, matt12} which is considering physical collision histories. It derives evolution equations and applies semigroup techniques to study the evolution of associated probabilities of  collision trees instead of the BBGKY hierarchy.

We summarise some key ingredients of that approach.
A collision tree or collision history $\Phi$ is a set that includes the collisions that the tagged particle experiences. More precisely, it includes the initial position and velocity of the tagged particle $(x_0, v_0) \in \mathds{T}^3 \times \mathds{R}^3$ along with a list of collisions that the tagged particle experiences. Each collision is denoted by $(t_j, \nu _j, v_j) \in (0,T] \times \mathbb{S}^2 \times \mathds{R}^3$, where $t_j$ is the time that the $j$-th collision happens, $\nu _j$ is the collision parameter and $v_j$ is the incoming velocity of the background particle.
\begin{definition}
The set of all collision trees $\mathcal{MT}$ is defined by
\begin{align*}
\mathcal{MT} :=\{ ((x_0, v_0), (t_1, \nu_1, v_1),...,(t_n,\nu_n, v_n)  ) : & \, (x_0,v_0)\in \mathcal{U} \times \mathds{R}^3,\\  &  \, t_i\in[0,T],\ \nu_i\in \mathbb{S}^2 ,\ v_i\in \mathds{R}^3, \ n\in \mathds{N}_0 \}.
\end{align*}
For a tree $\Phi \in \mathcal{MT}$ is defined the function $n(\Phi)$ to be the number of collisions in this tree and for $n\ge 1$ define $\bar{\Phi}$ as the collision history identical to $\Phi$ but with the final collision removed. Furthermore, we define the maximum collision time $\tau \in [0,T]$ as
\begin{equation*}
\tau = \tau(\Phi) :=
\begin{cases}
0, &n(\Phi)=0\\
\max\limits_{1\le j\le n} t_j, & \mathrm{else},
\end{cases}
\end{equation*}
we denote by $\bar{\Phi}$ the tree without the final collision in $\Phi$, and we define the marker of the final collision as,
$$(\tau, \bar{\nu }, \bar{v}):=(t_n, \nu_n , v_n).$$
\end{definition}

\definition For a collision history $\Phi \in \mathcal{MT}$, the maximum velocity $\mathcal{V}(\Phi) \in [0, \infty)$ in the history is defined as
$$\mathcal{V}(\Phi):= \max \big\{ \max_{j=1,...,n(\Phi)} |v_j|, \max_{s\in [0,T]} |v(s)|  \big\},$$
where $v_j$, for $j=1,...,n(\Phi)$ is the velocity of each background particle in the collision history $\Phi$ and $v(s),$ $s \in [0,T]$ is the velocity of the tagged particle in the same history for different times.

We associate probabilities of finding a given collision tree. We first define the idealised distribution.
Let $\Phi \in \mathcal{MT}$, then $P_0(\Phi)$ is zero unless $\Phi$ involves no collisions, in which case $P_0(\Phi)$ is given by initial distribution $f_0(u_0,v_0)$. $P_t(\Phi)$ remains zero until $t=\tau$ when there is an instantaneous increase to a positive value depending on $P_\tau(\bar{\Phi})$ and the final collision in $\Phi$. For $t>\tau$, $P_t(\Phi)$ decreases at a rate that is obtained by considering all possible collisions. This can be expressed as
\begin{equation} \label{eq-id}
\begin{cases}
\partial _t P_t (\Phi) & = c \mathcal{Q}_t[P_t](\Phi)
= c \mathcal{Q}_t^+ [P_t](\Phi) - c \mathcal{Q}^{-}_t[P_t](\Phi), \\ P_0(\Phi) & = f_0 (x_0,v_0)1_{n(\Phi)=0},
\end{cases}
\end{equation}
where
\begin{equation*}
\mathcal{Q}_t^+ [P_t](\Phi): = \left\{\begin{array}{cc}\delta(t-\tau)P_t (\bar{\Phi})g_0(v')[(v(\tau^-)-v')\cdot \nu]_+&\mbox{if } n(\Phi)\geq 1,
\\ 0&\mbox{if } n(\Phi)=0,\end{array}\right.
\end{equation*}
and
\begin{equation*}
\mathcal{Q}^{-}_t[P_t](\Phi): = P_t(\Phi) \int_{\mathbb{S}^{2}} \int_{\mathbb{R}^3} g_0(\bar{v})[(v(\tau)-\bar{v})\cdot \nu ]_+ \, \mathrm{d}\bar{v} \, \mathrm{d}\nu.
\end{equation*}
The evolution equation \eqref{eq-id} is well-posed by \cite[Thm 3.1]{Matthies2018}. Before we can introduce the empirical distribution related to the particle model, we need to introduce some further notation.
\definition A history $\Phi \in \mathcal{MT}$ is called non-grazing if
\begin{equation*}
\min_{1\le j \le n(\Phi)} \nu_j \cdot ( v(t_j^-) - v_j)>0.
\end{equation*}
This means that all the collisions in the history $\Phi$ are non-grazing, i.e., the tagged particle and each background particle $j$ are not flying parallel for long time.
\definition We say that a collision history $\Phi \in \mathcal{MT}$ is free from initial overlap at diameter $\varepsilon$ if initially the tagged particle is at least $\varepsilon$ away from the centre of each background particle. That is to say, for all $j=1,...,n(\Phi),$ $$|x_0-x_j|>\varepsilon.$$
Define $S(\varepsilon) \subset \mathcal{MT}$ to be the set of all histories that are free from initial overlap at radius $\varepsilon$.

\definition A collision history $\Phi \in \mathcal{MT}$ is called re-collision free at diameter $\varepsilon$ if for all $j=1,...,n(\Phi)$ and for all $t\in [0,T] \setminus \{t_j\}$, $$ |x(t) - (x_j +tv_j)|>\varepsilon.$$
That is, if the tagged particle and a background particle $j$ collide at time $t_j$ then the tagged particle has not previously collided and will not re-collide with the background particle $j$ up to time $T$.
Define the set
\begin{equation*}
R(\varepsilon):= \{ \Phi \in \mathcal{MT} :\ \Phi \text{\ is\ re-collision\ free\ at\ diameter}\ \varepsilon \}.
\end{equation*}

\definition \label{1.3.6.} The set of good histories $\mathcal{G}(\varepsilon)$ of diameter $\varepsilon$ is defined by
$$\mathcal{G}(\varepsilon) := \{ \Phi \in \mathcal{MT} : \ n(\Phi) \le M(\varepsilon),\ \mathcal{V}(\Phi) <V(\varepsilon), \ \Phi \in R(\varepsilon) \cap S(\varepsilon) \ \mathrm{and} \ \Phi  \text{ is\ non-}\mathrm{grazing} \},$$
for any decreasing functions $V, M : (0,\infty) \to [0, \infty)$ such that $\lim_{\varepsilon \to 0}V(\varepsilon) = \lim_{\varepsilon \to 0}M(\varepsilon) = \infty.$

On the set of good histories we can express the particle dynamics in a similar fashion to \eqref{eq-id}.
We consider now the empirical distribution on collision histories $\hat{P_t^{\varepsilon}}$ defined by the dynamics of the particle system for particles with diameter $\varepsilon$. We will write $\hat{P_t}$ instead of $\hat{P_t^{\varepsilon}}$. The main result is that $\hat{P_t}$ solves the differential equation $\eqref{3}$ below which is similar to the idealised equation \eqref{eq-id}. 

Now we define the operator $\hat{\mathcal{Q}}_t$ which mirrors the idealised operator in the empirical case. For given history $\Phi$, a time $0<t<T$ and $\varepsilon >0$, define the function $\mathds{1}^{\varepsilon}_t[\Phi] : \mathds{T}^3 \times{\mathds{R}^3} \to \{ 0,1\}$ by
\begin{align}
\mathds{1}^{\varepsilon}_t[\Phi] (\bar{x}, \bar{v}) :=
\begin{cases}
1 & \text{if\ for\ all\ } s\in (0,t),\ |x(s)-(\bar{x} + s \bar{v})|>\varepsilon,\\
0 &\text{else}.
\end{cases}
\end{align}
That is to say, $ \mathds{1}^{\varepsilon}_t[\Phi](\bar{x}, \bar{v})$ is $1$ if a background particle starting at the position $(\bar{x}, \bar{v})$ avoids colliding with the tagged particle defined by the collision history $\Phi$ up to time $t$. For a history $\Phi$, $t\ge 0$ and $\varepsilon<0$ define the gain operator,
\begin{align*}
\hat{\mathcal{Q}}^+_t[\hat{P_t}](\Phi):=
\begin{cases}
\delta (t- \tau) \hat{P_t}(\bar{\Phi}) \frac{ g_0(v') [(v(\tau^-) - v') \cdot \nu]_+ }{ \int_{\mathds{T}^3 \times \mathds{R}^3}g_0 (\bar{v}) \mathds{1}^{\varepsilon}_t[\Phi] (\bar{x}, \bar{v}) \, \d\bar{x}  \, \d\bar{v}} & \text{if}\ n\ge 1\\
0 & \text{if}\ n=0.
\end{cases}
\end{align*}
Next define the loss operator,
\begin{align*}
\hat{\mathcal{Q}}^-_t[\hat{P_t}](\Phi):=
\hat{P_t}(\Phi) \frac{   \int_{\mathbb{S}^2} \int_{\mathds{R}^3}  g_0(\bar{v}) [ (v(\tau) - \bar{v}) \cdot \nu]_+ \, \d\bar{v} \, \d\nu -\hat{C} (\varepsilon) }{ \int_{\mathds{T}^3 \times \mathds{R}^3}g_0 (\bar{v}) \mathds{1}^{\varepsilon}_t[\Phi] (\bar{x}, \bar{v}) \, \d\bar{x}\, \d\bar{v}},
\end{align*}
for some $\hat{C}(\varepsilon) >0$ depending on $t$ and $\Phi$ of $ o(1)$ as $\varepsilon$ tends to zero detailed later.\\ Finally define the operator $\hat{\mathcal{Q}_t}$ as
\begin{equation*}
\hat{\mathcal{Q}_t} := \hat{\mathcal{Q}}_t^+ - \hat{\mathcal{Q}}_t^-.
\end{equation*}

\begin{theorem} \cite[Theorem 4.6.]{Matthies2018}  For $\varepsilon$ sufficiently small and for $\Phi \in \mathcal{G(\varepsilon)}$, $\hat{P_t}$ solves the following equation
\begin{align}\label{3}
\begin{cases}
\partial_t{\hat{P_t}}(\Phi) &= (c -\gamma(t)) \hat{\mathcal{Q}_t}[\hat{P_t}] (\Phi) \\
\hat{P_0}(\Phi) &= \zeta(\varepsilon) f_0 (x_0, v_0) \mathds{1}_{n(\Phi)=0}.
\end{cases}
\end{align}
The functions $\gamma$ and $\zeta$ are given by

\begin{equation}\label{2.19.}
\zeta(\varepsilon) := (1- \frac{4}{3} \pi \varepsilon^3)^N,
\end{equation}
and
\begin{align}
\gamma(t)):=
\begin{cases}
n(\bar{\Phi}) \varepsilon^2, & \text{if}\  t=\tau \\
n(\Phi) \varepsilon^2, & \text{if}\  t>\tau.
\end{cases}
\end{align}
\end{theorem}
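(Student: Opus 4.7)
The plan is to compute $\hat{P}_t(\Phi)$ explicitly from the deterministic $N$-particle dynamics, restricted to the set $\mathcal{G}(\varepsilon)$ of good histories, and then differentiate in $t$ to read off the three corrections to the idealised equation \eqref{eq-id}: the prefactor $\zeta(\varepsilon)$ in the initial datum, the rate modifier $c \mapsto c-\gamma(t)$, and the conditioning denominator $\int g_0 \mathds{1}^{\varepsilon}_t[\Phi]\,\mathrm{d}\bar x\,\mathrm{d}\bar v$ inside $\hat{\mathcal{Q}}_t^{\pm}$.

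First I would write $\hat{P}_t(\Phi)$, for a tree with $n(\Phi)=n$ collisions, as an integral over the initial positions and velocities of the $N$ background particles, restricted to configurations that (i) start without overlap with the tagged particle, and (ii) produce exactly the markers $(t_j,\nu_j,v_j)_{j=1}^n$ by time $t$. By independence of the initial data, together with the recollision-free, no-initial-overlap and non-grazing properties inherited from $\mathcal{G}(\varepsilon)$, this factorises cleanly. The $n$ colliding particles are each parametrised by their collision marker through a change of variables from initial position to $(t_j,\nu_j)$, producing the Jacobian $\varepsilon^2[(v(t_j^-)-v_j)\cdot\nu_j]_+$ and the velocity factor $g_0(v_j)$; the remaining $N-n$ particles contribute identical independent factors, each equal to the phase-space volume $\int_{\mathds{T}^3\times\mathds{R}^3} g_0(\bar v)\mathds{1}^{\varepsilon}_t[\Phi](\bar x,\bar v)\,\mathrm{d}\bar x\,\mathrm{d}\bar v$ of admissible initial data avoiding the tagged trajectory up to time $t$.

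At $t=0$ with $n=0$, every one of the $N$ background particles must lie outside the $\varepsilon$-ball around the tagged particle; using $|\mathds{T}^3|=1$ and the normalising constant $\mathcal{Z}_N$, this yields exactly $\zeta(\varepsilon)=(1-\frac{4}{3}\pi\varepsilon^3)^N$. For $t>\tau$, differentiating the factored expression in $t$ only affects the $N-n$ non-collided factors, and each derivative brings down the instantaneous rate at which a free background particle would begin intersecting the tagged trajectory; after dividing by the same denominator this produces $(N-n)\varepsilon^2 = c-n(\Phi)\varepsilon^2 = c-\gamma(t)$ times $\hat{\mathcal{Q}}_t^-$. At $t=\tau$ the delta contribution in $\hat{\mathcal{Q}}_t^+$ arises from the jump when a new marker is appended: $n-1$ collisions have already occurred, so $N-(n-1)$ particles are available to deliver the $n$-th collision, giving the prefactor $(N-(n-1))\varepsilon^2 = c-\gamma(\tau)$ together with the Jacobian $[(v(\tau^-)-v')\cdot\nu]_+ g_0(v')$ and the same normalising denominator.

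The main obstacle is identifying and controlling the residual term $\hat{C}(\varepsilon)$ in $\hat{\mathcal{Q}}_t^-$. The true differential rate counts configurations that avoid both past and future segments of the tagged trajectory, whereas the naive loss integral $\int g_0[(v(\tau)-\bar v)\cdot\nu]_+\,\mathrm{d}\bar v\,\mathrm{d}\nu$ ignores the past cylinder; the correction is the measure of initial data whose straight-line extension would have intersected the past trajectory. Using $n(\Phi)\le M(\varepsilon)$, $\mathcal{V}(\Phi)\le V(\varepsilon)$, the non-grazing lower bound on $\nu_j\cdot(v(t_j^-)-v_j)$, and the scaling $N\varepsilon^2=c$, one shows that this overlap of past and future cylinders is $o(1)$ uniformly over $\Phi\in\mathcal{G}(\varepsilon)$; this geometric measure estimate is the quantitative heart of the proof.
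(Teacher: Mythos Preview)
The paper does not give its own proof of this theorem: it is stated as a quotation of \cite[Theorem~4.6]{Matthies2018}, and the reader is simply referred there. There is therefore no in-paper argument to compare your proposal against.

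For what it is worth, your sketch follows the strategy of the original proof in \cite{Matthies2018}: express $\hat{P}_t(\Phi)$ as a product over the $n$ colliding particles (each contributing the Jacobian $\varepsilon^2[(v(t_j^-)-v_j)\cdot\nu_j]_+\, g_0(v_j)$ from the change of variables $x_j\mapsto(t_j,\nu_j)$) and the $N-n$ non-colliding particles (each contributing the avoidance integral $\int g_0\,\mathds{1}^{\varepsilon}_t[\Phi]$), and then differentiate. Two small caveats. First, you should track the combinatorial prefactor $N(N-1)\cdots(N-n+1)$ that selects and orders which of the $N$ background particles realise the $n$ collisions; this is precisely what makes the gain term carry $(N-n+1)\varepsilon^2=c-n(\bar{\Phi})\varepsilon^2$ rather than a bare $\varepsilon^2$. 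Second, your appeal to ``the non-grazing lower bound on $\nu_j\cdot(v(t_j^-)-v_j)$'' in the $\hat{C}(\varepsilon)$ estimate is misplaced: non-grazing is only a strict positivity condition (it guarantees that the change of variables is well-defined), not a quantitative bound, and the $o(1)$ control on $\hat{C}(\varepsilon)$ actually comes from the swept-cylinder volume estimate $\eta^{\varepsilon}_t(\Phi)\le C\varepsilon^2 T(\beta+V(\varepsilon))$, of the type used later around Lemma~\ref{2.4.3.}.
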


The evolutions of the idealised distribution \eqref{eq-id} and the empirical distribution \eqref{3} allow now a detailed understanding and analysis of the errors.

\section{Quantitative errors for Rayleigh gas}

 To prove Theorem \ref{thm:1.1.} we
  extend the work of \cite{Matthies2018} by looking  at error estimates over longer times considering long-term dynamics. We are doing this by finding a quantitative error for the difference $ |P_t(S)- \hat{P_t}(S)|$, detailed expressions are  in Proposition $\ref{2.3.1.}$ below. We first provide quantitative estimates for the energy of the tagged particle, the expected number of collisions and  estimates for recollisions. It will be enough to provide all these estimates for idealised evolution.

 \subsection{Energy estimates}
 For this,
let us define the kinetic energy
\begin{align*}
M_f(t):=\int_{\mathds{T}^3} \int_{\mathds{R}^3}  f_t(x,v) (1+|v|^2) \, \mathrm{d}v \, \d x
\end{align*}
and also define the momentum
\begin{align*}
D_f(t) := \int_{\mathds{T}^3} \int_{\mathds{R}^3}  f_t(x,v)|v| \, \mathrm{d}v\, \d x.
\end{align*}
Then
\begin{align*}
M_f(t)=& \int_{\mathds{T}^3} \int_{\mathds{R}^3}  f_t(x,v)\, \mathrm{d}v \, \d x+
\int_{\mathds{T}^3} \int_{\mathds{R}^3}  f_t(x,v)|v|^2 \mathrm{d}v \, \d x
=  1+ \int_{\mathds{T}^3} \int_{\mathds{R}^3}  f_t(x,v)|v|^2 \mathrm{d}v \, \d x,
\end{align*}
since $f_t$ is a probability density. Now set
\begin{align*}
E(t) :=\int_{\mathds{T}^3} \int_{\mathds{R}^3}  f_t(x,v)|v|^2 \mathrm{d}v \, \d x.
\end{align*}
We introduce two constants $M_g$ and $C_g$. They are finite, as the probability density $g_0$ is admissible.
\begin{align}
\int_{\mathbb{S}^2} \int_{\mathds{R}^3}g_0(\bar{v}') |\bar{v}'|^2 \ \d\bar{v}' \, \d{\nu} =: M_g \quad \textrm{and} \quad
\int_{\mathbb{S}^2} \int_{\mathds{R}^3}g_0(\bar{v}') |\bar{v}'|^3 \ \d\bar{v}' \, \d{\nu} =: C_g. \label{g-const}
\end{align}
\begin{proposition}\label{1.1.1}
For all $t \ge 0$, it holds true that
\begin{align*}
M_f(t)  \le 1+ E_0^2 + (M_gE_0 +C_g)ct +\left(M_g+ \frac{C_g}{E_0}\right)^2 \frac{c^2 t^2}{2},
\end{align*} where $E_0 := \max \{ 1, E(0)\}$.
\end{proposition}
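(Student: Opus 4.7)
The approach is a direct moment estimate for the linear Boltzmann equation, with the main care going into integrating the sublinear differential inequality that arises. Since $\int Q[f_t]\,\d v = 0$, mass is conserved, so $\|f_t\|_{L^1} = 1$ and $M_f(t) = 1 + E(t)$ where $E(t) := \int f_t(x,v)|v|^2\,\d x\,\d v$. Differentiating $E$ along \eqref{1.1} and integrating the transport term by parts on the torus gives $E'(t) = c\int Q[f_t](x,v)|v|^2\,\d x\,\d v$. Applying the pre-/post-collisional involution $(v,\bar v,\nu)\mapsto(v',\bar v',\nu)$ to $Q^+$ (unit Jacobian; the cross-section $[(v-\bar v)\cdot\nu]_+$ is preserved after the harmless flip $\nu\to-\nu$) and combining with $Q^-$ produces the standard weak-form expression
\begin{align*}
E'(t) = c\int f_t(x,v)\,g_0(\bar v)\,[(v-\bar v)\cdot\nu]_+\bigl(|v'|^2 - |v|^2\bigr)\,\d x\,\d v\,\d\bar v\,\d\nu.
\end{align*}

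Elastic energy conservation $|v'|^2+|\bar v'|^2=|v|^2+|\bar v|^2$ bounds the last bracket by $|\bar v|^2$, and $\int_{\mathbb{S}^2}[(v-\bar v)\cdot\nu]_+\,\d\nu = \pi|v-\bar v|$ together with $|v-\bar v| \le |v|+|\bar v|$ factorise the resulting integral into moments of $f_t$ and of $g_0$. Admissibility of $g_0$ ensures $M_g$ and $C_g$ in \eqref{g-const} are finite, giving $E'(t) \le c(M_g D_f(t) + C_g)$ (absolute constants absorbed by overestimation). Cauchy--Schwarz combined with $\|f_t\|_{L^1}=1$ yields $D_f(t)\le \sqrt{E(t)}$, hence the sublinear inequality
\begin{align*}
E'(t) \le c\bigl(M_g\sqrt{E(t)} + C_g\bigr).
\end{align*}

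To integrate, I compare with the majorising ODE $\tilde E' = c(M_g\sqrt{\tilde E} + C_g)$, $\tilde E(0) = E_0$: monotonicity of the right-hand side in $\tilde E$ and $E(0) \le E_0$ give $E(t) \le \tilde E(t)$ by standard ODE comparison. I then substitute $y := \sqrt{\tilde E}$, so that $2yy' = c(M_g y + C_g)$; because $\tilde E$ is non-decreasing, $y(t) \ge y(0) = \sqrt{E_0} \ge 1$, and therefore $y' \le \tfrac{c}{2}(M_g + C_g/\sqrt{E_0})$. Integrating in $t$ and squaring gives $\tilde E(t) \le \bigl(\sqrt{E_0} + \tfrac{c}{2}(M_g + C_g/\sqrt{E_0})\,t\bigr)^2$; using $\sqrt{E_0}\le E_0$ and $E_0\le E_0^2$ (both valid since $E_0 \ge 1$) in the linear and constant coefficients, and adding the $1$ from $M_f = 1+E$, reproduces the claimed polynomial-in-$ct$ bound, with the absolute constants absorbed into the coefficients as in the statement.

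The only real obstacle is the sublinearity $\sqrt{E(t)}$, which blocks a one-line Gr\"onwall argument. The resolution is the substitution $y = \sqrt{\tilde E}$ combined with the lower bound $y(0) = \sqrt{E_0}\ge 1$ coming from the hypothesis $E_0 = \max\{1,E(0)\}$; this keeps the substitution non-degenerate throughout $[0,t]$ and produces an explicit closed-form quadratic-in-$ct$ majorant.
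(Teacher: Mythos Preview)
Your approach is essentially the same as the paper's: differentiate $E(t)$, use the linear Boltzmann equation to derive the sublinear inequality $E'(t)\le c(M_g\sqrt{E(t)}+C_g)$, and integrate via a comparison argument. The paper's manipulation to reach that inequality (splitting off $Q^+[f|v'|^2]-Q^-[f|v|^2]$ and using mass conservation) and your weak-form involution both lead to the same bound $\int f_t\,g_0\,|\bar v|^2(|v|+|\bar v|)$, so there is no real difference there.

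There is one small technical slip in your integration step. You take $\tilde E(0)=E_0$, giving $y(0)=\sqrt{E_0}$ and hence the quadratic coefficient $\tfrac14(M_g+C_g/\sqrt{E_0})^2$. This does \emph{not} in general imply the stated coefficient $\tfrac12(M_g+C_g/E_0)^2$: for $M_g$ small and $E_0>2$ your coefficient is strictly larger. The fix is to initialise the supersolution at $\tilde E(0)=E_0^2$ (still $\ge E(0)$ since $E_0\ge1$), so that $y(0)=E_0$ and $y'\le\tfrac{c}{2}(M_g+C_g/E_0)$; squaring then gives exactly $E(t)\le\bigl(E_0+\tfrac{c}{2}(M_g+C_g/E_0)t\bigr)^2$, which expands to the stated bound (indeed with $\tfrac14$ in the quadratic term, so the statement's $\tfrac12$ is already generous). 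This is precisely the supersolution the paper writes down without comment.
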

\begin{proof}
Taking the derivative of $E(t)$ we get
\begin{align*}
\frac{\d}{\d t}E(t) &= \frac{\d}{\d t}\int_{\mathds{T}^3} \int_{\mathds{R}^3}  f_t(x,v)|v|^2 \,  \mathrm{d}v \, \d x\\
&= \int_{\mathds{T}^3} \int_{\mathds{R}^3} -v\cdot \nabla_x f_t(x,v)|v|^2\, \mathrm{d}v \, \d x\\
&+ c \int_{\mathds{T}^3}\int_{\mathds{R}^3} |v|^2 \int_{\mathbb{S}^2} \int_{\mathds{R}^3}   f_t(x,v') g_0(\bar{v}') [(v-\bar{v})\cdot \nu]_+ \, \d \bar{v}   \, \d {\nu}\, \mathrm{d}v \, \d x\\
&- c \int_{\mathds{T}^3}\int_{\mathds{R}^3} |v|^2 f_t(x,v) \int_{\mathbb{S}^2} \int_{\mathds{R}^3}  g_0(\bar{v}) [(v-\bar{v})\cdot \nu]_+  \, \d \bar{v} \, \d {\nu}\, \mathrm{d} v \, \d x,
\end{align*}
where we used the linear Boltzmann equation $\eqref{1.1}$. Now we observe that it always holds true that $$|v|^2 \le |v'|^2 + |\bar{v}'|^2,$$
where $v$ is the post-collisional velocity and $ v'$ and $\bar{v}'$ are the pre-collisional velocities.
Thus,
\begin{align*}
\frac{\d}{\d t}E(t)
&\le  c \int_{\mathds{T}^3}\int_{\mathds{R}^3} (|v'|^2 + |\bar{v}'|^2) \int_{\mathbb{S}^2} \int_{\mathds{R}^3}   f_t(x,v') g_0(\bar{v}') [(v-\bar{v})\cdot \nu]_+ \, \d \bar{v}   \, \d {\nu}\, \mathrm{d}v \, \d x\\
&- c \int_{\mathds{T}^3}\int_{\mathds{R}^3} |v|^2 f_t(x,v) \int_{\mathbb{S}^2} \int_{\mathds{R}^3}  g_0(\bar{v}) [(v-\bar{v})\cdot \nu]_+ \, \d \bar{v} \, \d{\nu} \, \mathrm{d}v \, \d x\\
& = c \int_{\mathds{T}^3}\int_{\mathds{R}^3} 
   (Q^+[f|v'|^2]- Q^-[f|v|^2])\, \mathrm{d}v \, \d x\\
&+ c \int_{\mathds{T}^3}\int_{\mathds{R}^3} \int_{\mathbb{S}^2} \int_{\mathds{R}^3}f_t(x,v') g_0(\bar{v}') |\bar{v}'|^2 [(v-\bar{v})\cdot \nu]_+ \ \d \bar{v}  \, \d {\nu}\, \mathrm{d}v \, \d x.
\end{align*}
In the last equality, the first term is equal to zero, by the conservation of mass. Now, by using change of coordinates $\bar{v}, v \to \bar{v}',v'$ we get that the last integral becomes
\begin{align*}
&\int_{\mathds{T}^3}\int_{\mathds{R}^3} \int_{\mathbb{S}^2} \int_{\mathds{R}^3}f_t(x,v') g_0(\bar{v}') |\bar{v}'|^2 [(v-\bar{v})\cdot \nu]_+ \, \d \bar{v} \, \d {\nu}\, \mathrm{d}v \, \d x\\
&= \int_{\mathds{T}^3}\int_{\mathds{R}^3} \int_{\mathbb{S}^2} \int_{\mathds{R}^3}f_t(x,v') g_0(\bar{v}') |\bar{v}'|^2 [(v'-\bar{v}')\cdot \nu]_+ \, \d \bar{v}' \, \d {\nu}\, \mathrm{d}v' \, \d x.
\end{align*}
Therefore,
\begin{align*}
\frac{\d}{\d t}E(t) &\le  c \int_{\mathds{T}^3}\int_{\mathds{R}^3} \int_{\mathbb{S}^2} \int_{\mathds{R}^3}f_t(x,v') g_0(\bar{v}') |\bar{v}'|^2 [(v'-\bar{v}')\cdot \nu]_+ \, \d \bar{v}' \, \d {\nu}\, \mathrm{d}v' \, \d x\\
& \le c \int_{\mathds{T}^3}\int_{\mathds{R}^3} \int_{\mathbb{S}^2} \int_{\mathds{R}^3}f_t(x,v') g_0(\bar{v}') |\bar{v}'|^2 (|v'|+|\bar{v}'|) \, \d \bar{v}' \, \d {\nu}\ \mathrm{d}v' \, \d x\\
&= c \int_{\mathds{T}^3}\int_{\mathds{R}^3}  f_t(x,v') |v'| \, \mathrm{d}v' \, \d x\int_{\mathbb{S}^2} \int_{\mathds{R}^3}g_0(\bar{v}') |\bar{v}'|^2 \, \d \bar{v}'   \, \d {\nu} \\
&\quad+ c \int_{\mathds{T}^3}\int_{\mathds{R}^3}  f_t(x,v')\, \mathrm{d}v' \, \d x \int_{\mathbb{S}^2} \int_{\mathds{R}^3}g_0(\bar{v}') |\bar{v}'|^3 \, \d \bar{v}' \, \d {\nu}.
\end{align*}
Therefore using the constants in \eqref{g-const},
\begin{align*}
\frac{\d}{\d t}E(t) &\le c M_g \int_{\mathds{T}^3}\int_{\mathds{R}^3}  f_t(x,v') |v'| \, \mathrm{d}v' \, \d x+C_g
= c M_g \int_{\mathds{T}^3}\int_{\mathds{R}^3}  f_t(x,v')^{\frac{1}{2}} f_t(x,v')^{\frac{1}{2}} |v'| \, \mathrm{d}v' \, \d x+cC_g\\
&\le c M_g \left(\int_{\mathds{T}^3}\int_{\mathds{R}^3}  f_t(x,v') |v'|^2 \, \mathrm{d}v' \, \d x\right)^{\frac{1}{2}} \left(\int_{\mathds{T}^3}\int_{\mathds{R}^3}  f_t(x,v')\, \mathrm{d}v' \, \d x\right)^{\frac{1}{2}} + c C_g\\
&= c M_g E(t)^{\frac{1}{2}} + c C_g,
\end{align*}
by the Cauchy-Schwarz inequality and the fact that $f_t$ is a probability density. Therefore we get
\begin{align*}
\frac{\d}{\d t}E(t) &\le c M_g \sqrt{ E(t)} + c C_g.
\end{align*}
This immediately leads  to
\begin{align*}
\int_{\mathds{T}^3} \int_{\mathds{R}^3}  f_t(x,v)|v|^2 \, \mathrm{d}v \, \d x& \le \left(E_0 + \big(\frac{M_g}{2}+ \frac{C_g}{2E_0}\big) c t \right)^2\\
& = E_0^2 + (M_gE_0 +C_g) c t +\left(M_g+ \frac{C_g}{E_0}\right)^2 \frac{c^2t^2}{2}, \quad \forall \ t\ge 0,
\end{align*}
where by $E_0 := \max \{ 1, E(0)\}$.
Thus,
\begin{align*}
M_f(t) &= 1+ \int_{\mathds{T}^3} \int_{\mathds{R}^3}  f_t(x,v)|v|^2 \, \mathrm{d}v \, \d x\\
&  \le 1+ E_0^2 + (M_gE_0 +C_g)c t +\left(M_g+ \frac{C_g}{E_0}\right)^2 \frac{c^2 t^2}{2}, \quad \forall \ t\ge 0.
\end{align*}
\end{proof}

\begin{lemma} \label{1.1.2.} For all $t\ge 0$, it holds true that
\begin{align*}
D_f(t) \le \sqrt{E(t)}.
\end{align*}
\end{lemma}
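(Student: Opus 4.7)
The plan is to obtain the bound directly from the Cauchy--Schwarz inequality, exploiting the fact that $f_t$ is a probability density, exactly as was done for the bound on $\frac{\d}{\d t}E(t)$ in the previous proposition.

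Concretely, I would write
\begin{align*}
D_f(t) = \int_{\mathds{T}^3}\int_{\mathds{R}^3} f_t(x,v)^{1/2}\, f_t(x,v)^{1/2}\,|v|\, \d v\, \d x,
\end{align*}
and then apply Cauchy--Schwarz in $L^2(\mathds{T}^3\times\mathds{R}^3)$ with the two factors $f_t^{1/2}$ and $f_t^{1/2}|v|$. This yields
\begin{align*}
D_f(t) \le \left(\int_{\mathds{T}^3}\int_{\mathds{R}^3} f_t(x,v)\, \d v\, \d x\right)^{1/2}\left(\int_{\mathds{T}^3}\int_{\mathds{R}^3} f_t(x,v)\,|v|^2\, \d v\, \d x\right)^{1/2}.
\end{align*}
Since $f_t$ is a probability density the first factor equals $1$, while by definition the second factor equals $\sqrt{E(t)}$, giving the claimed inequality.

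There is essentially no obstacle here; the only thing to be careful about is that the manipulation of $f_t$ as a probability density (total mass equal to $1$ for all $t \geq 0$) requires the conservation of mass property of the linear Boltzmann equation \eqref{1.1}, which was already invoked in the proof of Proposition \ref{1.1.1} when the $Q^+-Q^-$ integral was discarded. So this lemma is essentially a one-line corollary of Cauchy--Schwarz together with mass conservation.
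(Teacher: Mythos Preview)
Your proof is correct and is essentially identical to the paper's own argument: the paper also splits $f_t|v| = f_t^{1/2}\cdot f_t^{1/2}|v|$, applies Cauchy--Schwarz, and uses that $f_t$ is a probability density to reduce the first factor to $1$.
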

\begin{proof}
By taking the definition of $D_f(t)$ and using the Cauchy-Schwarz inequality we take
\begin{align*}
D_f(t) = \int_{\mathds{T}^3} \int_{\mathds{R}^3}  f_t(x,v)|v| \, \mathrm{d}v \, \d x
&= \int_{\mathds{T}^3} \int_{\mathds{R}^3} |v|  f^{\frac{1}{2}}_t(x,v) f^{\frac{1}{2}}_t(x,v)\, \mathrm{d}v \, \d x\\
&\le \left( \underbrace{\int_{\mathds{T}^3} \int_{\mathds{R}^3} |v|^2  f_t(x,v)  \, \d  v \,  \d x}_{= E(t)} \right)^{\frac{1}{2}} \underbrace{\left( \int_{\mathds{T}^3} \int_{\mathds{R}^3} f_t(x,v)\,  \d v\,  \d x\right)^{\frac{1}{2}} }_{=1}\\
& = \sqrt{E(t)}.
\end{align*}
Therefore
\begin{align*}
D_f(t) & \le \sqrt{E(t)} \le \sqrt{ E_0^2 + (M_gE_0 +C_g)ct +\left(M_g+ \frac{C_g}{E_0}\right)^2 \frac{c^2t^2}{2}}, \quad \forall \ t\ge 0.
\end{align*}
\end{proof}
So Lemma \ref{1.1.2.}  implies that the momentum of the particles grows at most linearly for all times.
\subsection{Number of collisions}
Now, we want to find a bound for $\mathbb{E}(n(\Phi))(t)$, where $n(\Phi)$ is the number of collisions in the collision history $\Phi$.~We will try to find an evolution equation for $\mathbb{E}(n(\Phi))(t)$. For that we will use \eqref{eq-id} for the evolution of the idealised distribution. 
The idealised equation should be thought as equivalent of the linear Boltzmann equation but written on the space $\mathcal{MT}$ instead of on the phase-space $\mathds{T}^3 \times \mathds{R}^3$.

\begin{lemma} The expected number of collisions satisfies the estimate
\begin{align} \label{2.2}
\mathbb{E}(n(\Phi))(t)
 \le
 1+  \pi \left( (\beta + E_0) ct
 + \frac{1}{2 \sqrt{2}} (M_g + \frac{C_g}{E_0}) c^2 t^2 \right),
  \quad \forall t\ge 0.
\end{align}
\end{lemma}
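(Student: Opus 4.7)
The plan is to derive a first-order differential inequality for $\mathbb{E}(n(\Phi))(t):=\int_{\mathcal{MT}} n(\Phi)\, P_t(\Phi)\,\d\Phi$ from the idealised evolution \eqref{eq-id}, and then to integrate it using the moment estimates of Proposition \ref{1.1.1} and Lemma \ref{1.1.2.}. Differentiating under the integral and substituting \eqref{eq-id} gives
\[
\frac{\d}{\d t}\mathbb{E}(n(\Phi))(t) = c\int n(\Phi)\bigl(\mathcal{Q}^+_t[P_t](\Phi)-\mathcal{Q}^-_t[P_t](\Phi)\bigr)\,\d\Phi.
\]
The central algebraic step is to rewrite the gain integral. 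Since $\mathcal{Q}^+_t[P_t](\Phi)$ contains $\delta(t-\tau)P_t(\bar\Phi)$ and $n(\Phi)=n(\bar\Phi)+1$, changing variables from $\Phi$ to the pair $(\bar\Phi,(t,\nu,v'))$ (with the delta fixing $\tau=t$) yields
\[
c\int n(\Phi)\mathcal{Q}^+_t[P_t](\Phi)\,\d\Phi = c\int P_t(\Phi)\,(n(\Phi)+1)\int_{\mathbb{S}^2}\int_{\mathds{R}^3} g_0(\bar v)[(v(t)-\bar v)\cdot\nu]_+\,\d\bar v\,\d\nu\,\d\Phi.
\]
For $t>\tau$ the tagged particle has not collided in $(\tau,t]$, so $v(\tau)=v(t)$ in the loss term; subtracting, the $n(\Phi)$-contributions cancel and the derivative equals the total instantaneous collision rate
\[
\frac{\d}{\d t}\mathbb{E}(n(\Phi))(t) = c\int P_t(\Phi)\int_{\mathbb{S}^2}\int_{\mathds{R}^3} g_0(\bar v)[(v(t)-\bar v)\cdot\nu]_+\,\d\bar v\,\d\nu\,\d\Phi.
\]

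Next, I would apply the standard spherical identity $\int_{\mathbb{S}^2}[(v-\bar v)\cdot\nu]_+\,\d\nu = \pi|v-\bar v|$ and $|v-\bar v|\le|v|+|\bar v|$ to deduce
\[
\frac{\d}{\d t}\mathbb{E}(n(\Phi))(t) \le \pi c\bigl(D_f(t)+\beta\bigr),\qquad \beta := \int_{\mathds{R}^3} g_0(\bar v)|\bar v|\,\d\bar v,
\]
using that the marginal of $P_t$ over $\mathcal{MT}$ (keeping only the current tagged-particle position and velocity) coincides with $f_t$, so $\int P_t(\Phi)|v(t)|\,\d\Phi = D_f(t)$, and that $\beta<\infty$ by admissibility of $g_0$. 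Lemma \ref{1.1.2.} gives $D_f(t)\le\sqrt{E(t)}$, and by bounding the quadratic polynomial in Proposition \ref{1.1.1} by the perfect square $\bigl(E_0+\tfrac{1}{\sqrt 2}(M_g+C_g/E_0)ct\bigr)^2$ one obtains $\sqrt{E(t)}\le E_0+\tfrac{1}{\sqrt 2}(M_g+C_g/E_0)ct$. Integrating from $0$ with the initial value $\mathbb{E}(n(\Phi))(0)=0$ (since $P_0$ is supported on trees with $n(\Phi)=0$) produces the stated bound \eqref{2.2}; the additive constant $1$ on the right-hand side is a harmless slack.

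The hard part will be the bookkeeping in the gain rewriting: one has to interpret the $\delta(t-\tau)$ correctly, perform the change of variables $\Phi\leftrightarrow(\bar\Phi,(t,\nu,v'))$ so that the combinatorial weight $(n(\bar\Phi)+1)$ arising from the index shift is correctly married to $P_t(\bar\Phi)$, and carefully track which instance of $v(\cdot)$ is a pre- versus post-collisional tagged-particle velocity, so that after relabeling the velocity entering the kernel is unambiguously the current $v(t)$. Once this cancellation is explicit, everything else reduces to a routine integration of the moment bounds already proved.
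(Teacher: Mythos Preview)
Your proposal is correct and follows essentially the same route as the paper: differentiate $\int n(\Phi)P_t\,\d\Phi$, use the idealised evolution, split the gain via $n(\Phi)=n(\bar\Phi)+1$, cancel against the loss, and bound the remaining collision-rate integral by $\pi c(\beta+D_f(t))$ before integrating with Lemma~\ref{1.1.2.} and Proposition~\ref{1.1.1}. The only cosmetic difference is that the paper packages the cancellation step as a variation-of-constants formula for $h_t:=n(\Phi)P_t(\Phi)$ (writing $\partial_t h_t=\mathcal{Q}[h_t]+\Delta^\varepsilon$ and using that the idealised semigroup preserves $L^1$ mass), whereas you cancel directly; the paper also takes the initial value as $1$ rather than $0$, which you rightly treat as harmless slack.
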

\begin{proof}
 We observe that at the initial time $t=0$ there is only the tagged particle in the tree, since there is no collision yet, i.e., no initial overlap in the idealised evolution. Thus,
$\mathbb{E}(n(\Phi))(0)=1$, and  for general $t$ we set
 $$\mathbb{E}(n(\Phi))(t) = \int_{\mathcal{MT}} n(\Phi) P_t(\Phi) \, \d \Phi.$$
Hence,
\begin{align*}
\frac{\d}{\d t}\mathbb{E}(n(\Phi))(t) = \frac{\d}{\d t}\int_{\mathcal{MT}} n(\Phi) P_t(\Phi)\, \d \Phi
& =\int_{\mathcal{MT}}  \frac{\d}{\d t} n(\Phi) P_t(\Phi)\, \d \Phi+ \int_{\mathcal{MT}} n(\Phi) \frac{\d}{\d t}P_t(\Phi)\, \d \Phi\\
&= c \int_{\mathcal{MT}} n(\Phi)[ Q^+[P_t](\Phi) - P_t(\Phi)Q^-_{\tau}(\Phi) ]\, \d \Phi,
\end{align*}
here we used equation $\eqref{eq-id}$ and the fact that $\frac{\d}{\d t} n(\Phi)=0.$
Now,
\begin{align*}
\int_{\mathcal{MT}} n(\Phi) Q^+[P_t](\Phi)\, \d \Phi & = \int_{\mathcal{MT}} n(\Phi) \mathds{1}_{t=\tau(\Phi)} P_t(\bar{\Phi})g(\bar{v}) |v^{\varepsilon} (\tau^-) - \bar{v}|\, \d \Phi \\
&= \int_{\mathcal{MT}} n(\bar{\Phi}) \mathds{1}_{t=\tau(\Phi)} P_t(\bar{\Phi})g(\bar{v}) |v^{\varepsilon} (\tau^-) - \bar{v}|\, \d \Phi\\
&\quad+ \int_{\mathcal{MT}}\mathds{1}_{t=\tau(\Phi)} P_t(\bar{\Phi})g(\bar{v}) |v^{\varepsilon} (\tau^-) - \bar{v}|\, \d \Phi
\end{align*}
and
\begin{align*}
\int_{\mathcal{MT}} n(\Phi)P_t(\Phi)Q^-_{\tau}(\Phi) \, \d \Phi = \int_{\mathcal{MT}} n(\Phi)P_t(\Phi) \int_{\mathds{R}^3} \int_{B(0,1)} g(v_*)|v^{\varepsilon}(t) - v_* |\, \d {S}\, \d v_*\, \d \Phi.
\end{align*}
Thus,
\begin{align*}
\frac{\d}{\d t}\mathbb{E}(n(\Phi))(t) &=  c \int_{\mathcal{MT}} n(\Phi) \mathds{1}_{t=\tau(\Phi)} P_t(\bar{\Phi})g(\bar{v}) |v^{\varepsilon} (\tau^-) - \bar{v}|\, \d \Phi \\
& \quad- c \int_{\mathcal{MT}} n(\Phi)P_t(\Phi) \int_{\mathds{R}^3} \int_{B(0,1)} g(v_*)|v^{\varepsilon}(t) - v_* |\, \d {S}\, \d v_*\, \d \Phi\\
&\quad+  c \int_{\mathcal{MT}}\mathds{1}_{t=\tau(\Phi)} P_t(\bar{\Phi})g(\bar{v}) |v^{\varepsilon} (\tau^-) - \bar{v}| \, \d \Phi.
\end{align*}
We define as $h_t(\Phi) := n(\Phi) P_t(\Phi)$ and then we get
\begin{align*}
\frac{\d}{\d t}\mathbb{E}(n(\Phi))(t) &=  c \int_{\mathcal{MT}} [ Q^+ [h_t](\Phi) - h_t(\Phi)Q^-_{\tau}(\Phi)]\, \d \Phi
+  c \int_{\mathcal{MT}}\mathds{1}_{t=\tau(\Phi)} P_t(\bar{\Phi})g(\bar{v}) |v^{\varepsilon} (\tau^-) - \bar{v}|\, \d \Phi.
\end{align*}
Thus,
\begin{align*}
\frac{\d}{\d t}h_t(\Phi) &= c Q^+ [h_t](\Phi) - c h_t(\Phi)Q^-_{\tau}(\Phi)+ c \mathds{1}_{t=\tau(\Phi)} P_t(\bar{\Phi})g(\bar{v}) |v^{\varepsilon} (\tau^-) - \bar{v}|.
\end{align*}
We define the forcing term $\Delta(t, \varepsilon)=\Delta^{\varepsilon}(t):=\mathds{1}_{t=\tau(\Phi)} P_t(\bar{\Phi})g(\bar{v}) |v^{\varepsilon} (\tau^-) - \bar{v}|.$
Then the above equation becomes
\begin{align*}
\frac{\d}{\d t}h_t(\Phi) &= c Q^+ [h_t](\Phi) - c h_t(\Phi)Q^-_{\tau}(\Phi)+ c \Delta^{\varepsilon}(t).
\end{align*}
By using the variation of constant formula, we have
\begin{equation*}
h_t(\Phi) = c P_t(h_0(\Phi)) + c \int_{0}^{t} P_{t-s} ( \Delta^{\varepsilon} (s))\, \d s,
\end{equation*}
where $P_t$ is the solution semigroup of equation $\eqref{3}$. Integrating over $\mathcal{MT}$ we obtain
\begin{equation}\label{1.4}
\int_{\mathcal{MT}} h_t(\Phi) \, \d \Phi = c \int_{\mathcal{MT}} P_t (h_0(\Phi)) \, \d \Phi +
c\int_{\mathcal{MT}} \int_{0}^{t}P_{t-s} (\Delta ^{\varepsilon}(s)) \, \d s \,  \mathrm{d} \Phi,
\end{equation}
with
\begin{align*}
\int_{\mathcal{MT}} \int_{0}^{t}P_{t-s} (\Delta ^{\varepsilon}(s)) \, \d s \, \mathrm{d} \Phi  & = \int_{\mathcal{MT}} \left| \int_{0}^{t}P_{t-s} (\Delta ^{\varepsilon}(s)) \, \d s \right| \, \mathrm{d} \Phi
 =  \int_{0}^{t} \left| \int_{\mathcal{MT}} P_{t-s} (\Delta ^{\varepsilon}(s)) \, \d \Phi \right| \, \d s \\
& =  \int_{0}^{t} \big \| P_{t-s} (\Delta ^{\varepsilon}(s)) \big\|_{L^1(\mathcal{MT})} \, \d s 
= \int_{0}^{t} \big \| \Delta ^{\varepsilon}(s) \big\|_{L^1(\mathcal{MT})} \, \d s.
\end{align*}
Here, in the second equality we used Fubini's Theorem and in the last equality that the solution semigroup $P_{t-s}$ preserves the measure.
Now we will bound the term $\| \Delta ^{\varepsilon}(s) \|_{L^1(\mathcal{MT})}$.
\begin{align*}
\| \Delta ^{\varepsilon}(s)\|_{L^1(\mathcal{MT})} =& \int_{\mathcal{MT}} \mathds{1}_{s=\tau(\Phi)} P_s(\bar{\Phi})g(\bar{v}) |v^{\varepsilon} (\tau^-) - \bar{v}|\, \mathrm{d} \Phi\\
=&  \int_{\mathcal{MT}} \int_{\mathds{R}^3} \int_{B(0,1)} P_s(\bar{\Phi})g(\bar{v}) |v^{\varepsilon} (\tau^-) - \bar{v}| \, \d S \, \d \bar{v}\, \mathrm{d} \bar{\Phi} \\
 \le& \int_{\mathcal{MT}} \int_{\mathds{R}^3} \int_{B(0,1)}  P_s(\bar{\Phi})g(\bar{v}) (|v^{\varepsilon} (\tau^-)|+ |\bar{v}|) \, \d S \, \d \bar{v}\, \mathrm{d}\bar{ \Phi} \\
 =& \pi \int_{\mathcal{MT}} \int_{\mathds{R}^3} P_s(\bar{\Phi})g(\bar{v})|v^{\varepsilon} (\tau^-)|\, \d \bar{v} \, \mathrm{d} \bar{\Phi} 
 +\pi  \int_{\mathcal{MT}} \int_{\mathds{R}^3} P_s(\bar{\Phi})g(\bar{v}) |\bar{v}| \, \d \bar{v}\, \mathrm{d}\bar{ \Phi}.
\end{align*}
First we calculate the second term
\begin{align*}
\pi  \int_{\mathcal{MT}} \int_{\mathds{R}^3} P_s(\bar{\Phi})g(\bar{v}) |\bar{v}| \, \d \bar{v}\, \mathrm{d} \bar{ \Phi}
=
\pi   \int_{\mathds{R}^3}g(\bar{v}) |\bar{v}| \, \d\bar{v} \int_{\mathcal{MT}} P_s(\bar{\Phi}) \, \mathrm{d} \bar{\Phi}
=& \pi   \int_{\mathds{R}^3}g(\bar{v}) |\bar{v}| \, \d \bar{v}=\pi \beta.
\end{align*}
Here we used the fact that $P_s$ is a probability density on $ \mathcal{MT}$ and we defined $\beta : = \int_{\mathds{R}^3}g(\bar{v}) |\bar{v}| \, \d \bar{v} $.
Now we calculate the first term of $\| \Delta ^{\varepsilon}(s)\|_{L^1(\mathcal{MT})} $.
\begin{align*}
\pi \int_{\mathcal{MT}} \int_{\mathds{R}^3} P_s(\bar{\Phi})g(\bar{v})|v^{\varepsilon} (\tau^-)|\, \d \bar{v}\, \mathrm{d} \bar{\Phi}
=&\pi \int_{\mathcal{MT}} P_s(\bar{\Phi})|v^{\varepsilon} (\tau^-)| \mathrm{d} \bar{\Phi}  \int_{\mathds{R}^3} g(\bar{v}) \, \d \bar{v}\\
=&  \pi \int_{\mathcal{MT}} P_s(\bar{\Phi})|v^{\varepsilon} (\tau^-)| \, \mathrm{d} \bar{\Phi}
= \pi \int_{\mathds{T}^3} \int_{\mathds{R}^3}  f_s(x,v)|v| \, \mathrm{d}v \, \d x\\
\le & \pi \int_{\mathds{T}^3} \int_{\mathds{R}^3}  f_s(x,v) (1+|v|) \, \mathrm{d}v \, \d x
\end{align*}
Here we used the fact that $g$ is a probability density on $ \mathds{R}^3$ and \cite[Theorem 3.1]{Matthies2018}.
Now, by using the Lemma $\ref{1.1.2.}$ above we get
\begin{align*}
\| \Delta ^{\varepsilon}(s)\|_{L^1(\mathcal{MT})}
&\le \pi (\beta + D_f(s))\\
& \le \pi \left(\beta + \sqrt{E_0^2 +(M_g E_0+C_g)c s + \left(M_g +\frac{C_g}{E_0}\right)^2 \frac{c^2 s^2}{2}}\right).
\end{align*}
Thus, the equation $\eqref{1.4}$ becomes
\begin{align*}
\int_{\mathcal{MT}} h_t(\Phi) \, \d \Phi
& \le 1 + c \pi \int_{0}^{t} \left( \beta + \sqrt{E_0^2 +(M_g E_0+C_g) c s + \left(M_g +\frac{C_g}{E_0}\right)^2 \frac{c^2 s^2}{2}} \right)\, \d s\\
& \le 1 + c \pi \int_{0}^{t} \left( \beta + \sqrt{\left( E_0 + \left(M_g +\frac{C_g}{E_0}\right)\frac{c s}{\sqrt{2}} \right)^2 }\right)\, \d s\\
& \le 1+ c  \pi \left( (\beta + E_0)t 
+ \frac{1}{2 \sqrt{2}} (M_g + \frac{C_g}{E_0}) ct^2 \right).
\end{align*}
Therefore,
\begin{align*}
\mathbb{E}(n(\Phi))(t)
= \int_{\mathcal{MT}} h_t(\Phi) \, \d \Phi
 \le   1+  \pi \left( (\beta + E_0) ct
 + \frac{1}{2 \sqrt{2}} (M_g + \frac{C_g}{E_0}) c^2 t^2 \right).
\end{align*}
\end{proof}

\begin{proposition} \label{2.1.2.}
 Let $\eta >0$ sufficiently small, $\delta>0$ and let the decreasing functions $V, M : (0,\infty) \to [0, \infty)$ such that $\lim_{\varepsilon \to 0}V(\varepsilon) = \lim_{\varepsilon \to 0}M(\varepsilon) = \infty,$ then
\begin{align} \nonumber
& \mathds{P}_t \big(\text{ re-collisions with} \  n(\Phi)  \le M(\varepsilon)   \ and\ \mathcal{V}(\Phi) <V(\varepsilon) \big)\\
 &\le
 C M(\varepsilon) \left[ 1+ \frac{\eta V(\varepsilon)}{\delta^2} + \delta \right]\varepsilon (T V(\varepsilon))^2 + C M(\varepsilon) TV(\varepsilon) \left(\frac{1}{\eta } \right) \left( \frac{\varepsilon}{\eta}\right)^2 .\label{eqn:recol}
 \end{align}
\end{proposition}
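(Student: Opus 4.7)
The plan is to bound the probability by a union bound over the index $j\in\{1,\ldots,n(\Phi)\}$ of the background particle responsible for the re-collision and over which linear piece of the tagged particle's trajectory hosts the second close approach. Since $n(\Phi)\le M(\varepsilon)$, the tagged particle follows at most $M(\varepsilon)+1$ straight-line segments, each of length at most $T V(\varepsilon)$ because $\mathcal{V}(\Phi)<V(\varepsilon)$; a union bound over segment pairs contributes the explicit $M(\varepsilon)$ prefactor in \eqref{eqn:recol}. For each such pair I would control the Lebesgue measure of background data $(x_j,v_j)$ compatible with both the recorded collision and the putative re-collision, then integrate against $g_0$ using admissibility.

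For a fixed $v_j$ with $|v_j|\le V(\varepsilon)$ the condition that the straight trajectory $s\mapsto x_j+sv_j$ enters an $\varepsilon$-neighbourhood of a given linear piece of the tagged trajectory is a cylindrical tube condition on $x_j$ of three-dimensional Lebesgue measure at most $C\varepsilon^{2} T|v(s)-v_j|$. A single crossing contributes measure of order $\varepsilon^{2} TV(\varepsilon)$ after the $x_j$-integration, and an additional $V(\varepsilon)$ factor from integrating $v_j$ against the admissible $g_0$ yields $\varepsilon^{2} T V(\varepsilon)^{2}$ per segment. Demanding two such near-passes along distinct segments ties the two tube conditions together and reduces one power of $\varepsilon$ worth of phase-space, producing the base scaling $\varepsilon(T V(\varepsilon))^{2}$ which multiplies the bracketed factor in the first term of \eqref{eqn:recol}.

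The parameters $\eta$ and $\delta$ enter when quantifying this second tube condition. First, I would discard near-grazing collisions where $|\nu_j\cdot(v(t_j^{-})-v_j)|<\eta$: integrating the volume of admissible background data through such a slab yields $(\varepsilon/\eta)^{2}$ from the two-dimensional impact-parameter integral and a residual factor $(1/\eta)T V(\varepsilon)$ from the remaining freedom in time and velocity, producing the second summand $C M(\varepsilon) T V(\varepsilon)(1/\eta)(\varepsilon/\eta)^{2}$. On the non-grazing complement the post-collisional relative velocity is separated from the pre-collisional one by an angle of order $\eta$, and $\delta$ splits the remaining analysis into two subcases: for re-collisions occurring within time $\delta$ of the recorded one, a direct straight-line tube bound contributes the summand $\delta\cdot\varepsilon(T V(\varepsilon))^{2}$; for re-collisions separated by time at least $\delta$, transversality of the two closest-approach maps gives a Jacobian bounded below by $c\,\delta^{2}/(\eta V(\varepsilon))$, yielding the $\eta V(\varepsilon)/\delta^{2}$ term in the bracket. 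Summing these, together with the trivial $1$ coming from the unconstrained direction, produces $[1+\eta V(\varepsilon)/\delta^{2}+\delta]$.

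The main obstacle I anticipate is the transversality lemma underpinning the $\delta^{2}/(\eta V(\varepsilon))$ Jacobian lower bound: one needs a quantitative statement that, off the $\eta$-grazing set and for a time-gap at least $\delta$, the map from $(x_j,v_j)$ to the pair of closest-approach parameters along the two segments is a local submersion with Jacobian controlled explicitly by $\eta$, $\delta$ and $V(\varepsilon)$. Once this geometric input is in place, the remainder is bookkeeping: applying Fubini, summing the segment-pair contributions (each producing at most a factor $M(\varepsilon)$), and using the admissibility bounds $\int g_0(\bar v)(1+|\bar v|^{3})\,\d\bar v<\infty$ together with the essential-supremum control $\esssup_{\bar v}g_0(\bar v)(1+|\bar v|^{5})<\infty$ to carry out the velocity integrations uniformly and obtain \eqref{eqn:recol}.
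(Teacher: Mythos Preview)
Your decomposition assigns $\eta$ and $\delta$ roles different from the paper's, and the geometric inputs you would need (the grazing-slab scaling and the Jacobian lower bound) are asserted rather than derived; as stated they are the whole difficulty, not bookkeeping.

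In the paper $\eta$ is not a grazing cutoff but a \emph{time} threshold that splits the putative re-collision time $s\in(t_j,T]$ at $t_j+\eta$. For $s\ge t_j+\eta$ one solves the re-collision identity for the post-collisional \emph{root} velocity $v(t_j)$ and finds that it must lie within $\varepsilon/(s-t_j)\le \varepsilon/\eta$ of the explicit curve $r(s)=(Y_m-s\,v_k(t_k))/(t_j-s)$ in $\mathbb{R}^3$; a direct arclength bound $\int_{t_j+\eta}^T|r'(s)|\,\d s\le C\,TV(\varepsilon)/\eta$ then gives a tube of volume $C\,TV(\varepsilon)\,(1/\eta)(\varepsilon/\eta)^2$. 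No transversality lemma is required, and the $(\varepsilon/\eta)^2$ does not come from any impact-parameter integral over a grazing slab.

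For $s\in(t_j,t_j+\eta)$ the root velocity is essentially unconstrained, so the paper changes variables to the collision data $(v_k,\nu_k)$ of the re-colliding background particle. Fixing $\nu_k\cdot v_k$, the left side of the re-collision identity sweeps (as $s$ and $\tilde\nu$ vary) a cylinder of volume $\sim\eta V(\varepsilon)\varepsilon^2$; projecting onto $\nu_k^{\perp}$ and dividing by $s-t_k$ constrains the components of $v_k$ orthogonal to $\nu_k$. The parameter $\delta$ enters here as a lower bound on $s-t_k$: with probability $\lesssim\delta$ one has $s-t_k<\delta$ (this is the additive $\delta$ in the bracket), and on the complement the factor $(s-t_k)^{-2}$ yields the $1/\delta^2$. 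Counting periodic images $|m|\le 2TV(\varepsilon)$ supplies the $(TV(\varepsilon))^2$.

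Your scheme might be made to work, but the claimed Jacobian bound $c\,\delta^2/(\eta V(\varepsilon))$ for ``the map from $(x_j,v_j)$ to the pair of closest-approach parameters'' is the crux and is not evidently true with those exact exponents; nor is it clear why a grazing slab $|\nu_j\cdot(v(t_j^-)-v_j)|<\eta$ should produce precisely $(\varepsilon/\eta)^2(1/\eta)TV(\varepsilon)$. The paper avoids both issues by working with explicit one-parameter curves for the constrained velocity and by using $\eta$ and $\delta$ purely as time scales.
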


\begin{proof}
This is the most involved of the estimates. Note that we are estimating events for the idealised distribution, so we consider idealised trajectories that have point particles that come close within a distance $\varepsilon$.
The main strategy here is to estimate the volume of possible velocities and use that the root marginal is given by the solution of the linear Boltzmann equation.

To this end, we will first estimate this volume of bad trees in the case of one collision and, secondly, in the case of $j\ge 2$ collisions. First, note that $\mathcal{MT}_0\setminus R(\varepsilon)= \emptyset$, where $\mathcal{MT}_0 :=\{ \Phi \in \mathcal{MT}  :\ n(\Phi) = 0\}$. Next consider  $\Phi \in \mathcal{MT}_1 := \{ \Phi \in \mathcal{MT} :\ n(\Phi)=1\}$, $\Phi = ((x_0, v_0), (\tau, \bar{\nu}, \bar{v}))$, where $(x_0,v_0)\in  \mathds{T}^3\times \mathds{R}^3 $, $\tau \in [0,T]$, $\bar{\nu}\in \mathbb{S}^2$, $\bar{v} \in \mathds{R}^3$. If $\Phi \in \mathcal{MT}_1\setminus R(\varepsilon)$, then due to the periodic boundary conditions  there exits $m \in \mathds{Z}^3 :$ $|m|\le 2TV(\varepsilon)$, $\varepsilon >0$ and $V \colon (0,1) \to \mathds{R}^+$ and $\exists \ s \in (\tau , T]$ and $\exists \ \tilde{\nu}$ such that
\begin{align}\label{onecoll}
x(s) +m+\varepsilon \tilde{\nu} = x_1(s), \quad \varepsilon >0,
\end{align}
where $x(s) = x_0 +\tau v_0 +(s-\tau ) v(\tau )$,
with $v(\tau) = v_0 -\nu (v_0-\bar{v})\cdot \nu $
and $x_1(s)=x_0+\tau v_0   + (s-\tau ) \bar{v}(\tau)$.
Then, \eqref{onecoll} is equivalent to
\begin{align*}
 x_0 +\tau v_0 
 +(s-\tau ) v(\tau )+m +\varepsilon \tilde{\nu}
 &=
  x_0+\tau v_0
  + (s-\tau) \bar{v}(\tau )
 \end{align*}
 which is equivalent to
 \begin{align*}
  m= \varepsilon \tilde{\nu}
  + (s- \tau )(\bar{v}(\tau) - v(\tau)) 
\end{align*}
Multiplying each side by $\nu$ we take
\begin{align*}
 m \cdot {\nu} =-\varepsilon\tilde{\nu} \cdot {\nu}+ (s- \tau )(\bar{v}(\tau)  - v(\tau )) \cdot {\nu}
\end{align*}
where $(\bar{v}(\tau )  - v(\tau )) \cdot {\nu}=0$ by the definition of $v(\tau)$ above.
Hence,
\begin{align*}
 m \cdot {\nu} &= -\varepsilon \tilde{\nu} \cdot {\nu}.
\end{align*}
By taking the absolute value 
\begin{align*}
| m \cdot {\nu} |
&\le
\varepsilon |\tilde{\nu} \cdot {\nu}|
 \le
   \varepsilon |\tilde{\nu}|  |{\nu}|  =  \varepsilon,
\end{align*}
as  $ \nu, \tilde{\nu} \in \mathbb{S}^2$.
Thus $$|  m \cdot {\nu} | \le \varepsilon \Rightarrow  m \cdot {\nu}  \in (-\varepsilon, \varepsilon).$$
Therefore $$ \mathrm{Recollision\ set} \subset \bigcup_{|m|\le 2TV(\varepsilon)} \{ {\nu} :\ m \cdot {\nu}  \in (-\varepsilon, \varepsilon) \}.$$
The basic geometry strategy here is that the set of re-collision histories is inside a disc of radius $2TV(\varepsilon)$ and thickness $2 \varepsilon$. Therefore, this means that recollisions due to the periodic boundary conditions for the first particle are bounded by
\begin{equation}\label{recoll-perbc}
\mathrm{vol( Recollision\ set )}\le C \varepsilon (T V(\varepsilon))^2.
\end{equation}
Then taking into account that this can happen for any particle we obtain
\begin{equation}\label{2.4}
\mathrm{vol( Recollision\ set )}\le C  M(\varepsilon) \varepsilon (T  V(\varepsilon))^2
\end{equation}
and we will ignore this effect in the larger trees too.

Now, let $j\ge 2$ and $\Phi \in \mathcal{MT}_j$ with $\Phi= ((x_0, v_0), (t_1, \nu_1, v_1),...,(t_n, \nu_n, v_n))$. If $\Phi \in \mathcal{MT}_j \setminus R(\varepsilon)$, then either two of the collisions correspond to the same background particle or the tagged particle will re-collide with one of the background particles at some time $s\in (\tau , T ], \ \tau >t_j.$

For the first case we have that there exist numbers $l,k$ with $2\le l\le j$ and $1\le k< l$ such that the $l$th and the $k$th collision corresponds to the same background particle. This implies that
\begin{align*}
v_l = v_k + \nu _l (v(t_l^{-}) - v_k) \cdot \nu_l,
\end{align*}
where $v_l$ and $v_k$ are the incoming velocities of the background particles before the $l$th and the $k$th collision respectively, $\nu_l$ is the collision parameter and $v(t_l^{-}) $ is the velocity of the tagged particle before the $l$th collision. Thus $v_l$ is determined by $v_k$, $\nu_l$ and $ v(t_l^{-}) $, so $v_l$ can only be in a set of zero measure.

Now we will deal with the most substantial case, i.e. there exist $s\in (t_j, T],\ m\in \mathds{Z}^3,\ \tilde{\nu}$ and $k:\ 1\le k< j$ such that
\begin{align}\label{jcoll }
x(s) +m +\varepsilon \tilde{\nu} =x_k  (s),
\end{align}
where $ x(s)=x_0 +t_1v_0 +(t_2-t_1) v(t_1) +...+ (s-t_j)v(t_j)$ and $x_k(s)= x_k(t_k) +(s-t_k)v_k(t_k)$.
Then on the torus, \eqref{jcoll }
is equivalent to
\begin{align}\label{eqn:recollcond}
 x_0 +t_1v_0 +(t_2-t_1) v(t_1) &+...+ (s-t_j)v(t_j)+ m +\varepsilon \tilde{\nu}
 = x_k(t_k) +(s-t_k)v_k(t_k),
 \end{align}
 which is equivalent to
 \begin{align*}
Y+(s-t_j)v(t_j)+ m  +\varepsilon \tilde{\nu} = (s-t_k)v_k(t_k), 
\end{align*}
where $Y : = (t_k -t_{k-1})v(t_{k-1}) +... +(t_j -t_{j-1})v(t_{j-1}).$
Thus,
\begin{align*}
(s-t_j)v(t_j)= -\varepsilon \tilde{\nu}+sv_k(t_k)-Y_m,
\end{align*}
where $Y_m : = Y+m+t_kv_k(t_k).$
Hence the velocity of the root particle must satisfy
$$v(t_j)= \frac{-\varepsilon \tilde{\nu}}{s-t_j}+\frac{Y_m-sv(t_k)}{t_j-s}.$$
For fixed $s\in (t_j , T]$,
$v(t_j)$ as above and fixed $m$, then $v(t_j)$ is contained into the "cylinder" around the curve defined by $$ \frac{Y_m-sv(t_k)}{t_j-s},$$
and $Y_m= (t_k -t_{k-1})v(t_{k-1}) +... +(t_j -t_{j-1})v(t_{j-1}) +m +t_k   v_k( t_k    )$, for $1\le k\le j.$ Thus,
\begin{align*}
|Y_m| &\le (t_k -t_{k-1})|v(t_{k-1})| +... +(t_j -t_{j-1})|v(t_{j-1})| +|m| + t_k  |v_k( t_k   )|\\
& \le (t_k -t_{k-1})V(\varepsilon) +... +(t_j -t_{j-1})V(\varepsilon) +2TV(\varepsilon)+ t_k  V(\varepsilon)\\
&= (t_j -t_{k-1})V(\varepsilon)+2TV(\varepsilon)+t_k   V(\varepsilon) \le TV(\varepsilon)+2TV(\varepsilon)+TV(\varepsilon).
\end{align*}
Therefore,
\begin{equation*}
|Y_m|\le 4TV(\varepsilon), \ \varepsilon>0.
\end{equation*}

Let $\eta$ be 
sufficient small and split the interval $(t_j,T]$ into two parts. The first part is $(t_j,t_j+\eta)$ and the second part is $[t_j+\eta, T].$ 

First, consider $s\in [t_j+\eta, T]$ and define $r(s):= \frac{Y_m(t)-sv(t_k)}{t_j-s}$, which is a differentiable curve for $s>0$. Then,
$$\frac{\d}{\d s}r(s)= \frac{Y_m(t)-sv(t_k)}{(t_j-s)^2}- \frac{v(t_k)}{ t_j-s}.$$
Thus,
\begin{align*}
\left|\frac{\d}{\d s}r(s)\right|&\le  \frac{|Y_m(t)-sv(t_k)|}{(t_j-s)^2}+ \frac{|v(t_k)|}{| t_j-s|}
\le  \frac{4TV(\varepsilon)+TV(\varepsilon)}{(t_j-s)^2}+ \frac{V(\varepsilon)}{| t_j-s|}.
\end{align*}
Hence,
\begin{align*}
\left|\frac{\d}{\d s}r(s)\right| & \le  \frac{5TV(\varepsilon)}{(t_j-s)^2}+ \frac{V(\varepsilon)}{| t_j-s|}.
\end{align*}
Therefore the length of the curve is bounded by
\begin{align*}
\int_{t_j+\eta}^{T}\left|\frac{\d}{\d s}r(s)\right|\ \d s
& \le
\int_{t_j + \eta}^{T} \frac{5TV(\varepsilon)}{(t_j-s)^2}\ \d s + \int_{t_j + \eta}^{T}\frac{V(\varepsilon)}{| t_j-s|}\ \d s,
\end{align*}
where
\begin{align*}
 \int_{t_j + \eta}^{T} \frac{5TV(\varepsilon)}{(t_j-s)^2}\ \d s
 = (5TV(\varepsilon))\left[ \frac{1}{t_j -T} -\frac{1}{-\eta}\right]
&\le 5TV(\varepsilon)\left[\frac{1}{\eta  }\right].
\end{align*}
The second integral is
\begin{align*}
\int_{t_j + \eta}^{T}\frac{V(\varepsilon)}{| t_j-s|}\ \d s
= V(\varepsilon) [\log(T-t_j)-\log \eta]
& \le V(\varepsilon)[ \log T - \log \eta ]
= V(\varepsilon) \log \left(\frac{T}{\eta}\right) .
\end{align*}
Hence,
\begin{align*}
\mathrm{length} (r(s)) \le 5TV(\varepsilon)\left(\frac{1}{\eta }\right) + V(\varepsilon)\log \left(\frac{T}{\eta }\right)
& < 6TV(\varepsilon)\frac{1}{\eta } 
=: \hat{C}(\eta),
\end{align*}
where we just used 
$ \log (x) < x , \ \forall x>0. $
Therefore, there exists $C>0$ such that
\begin{equation}\label{2.6.}
\mathrm{vol}(v(t_j)) \le C \hat{C}(\eta) \left( \frac{\varepsilon}{\eta}\right)^2, \ s\in [t_j +\eta, T].
\end{equation}
Thus the volume of suitable root velocities
$$v(t_j)= \frac{\varepsilon (\nu-\tilde{\nu})}{s-t_j}+\frac{Y_m-sv_k(t_k)}{t_j-s}$$
 is bounded by the volume of the "cylinder" around the curve defined by $$ \frac{Y_m-sv_k(t_k)}{t_j-s},$$
where $Y_m= (t_k -t_{k-1})v(t_{k-1}) +... +(t_j -t_{j-1})v(t_{j-1}) +m +t_k   v_k( t_k )$, for $1\le k\le j$ and $ 2 \le j \le n(\Phi)$.

Secondly, consider the case where
$s \in (t_j, t_j  + \eta )$.  
In this interval, the recollisions can happen for most root velocities, so we will estimate the volume of the set of re-collisions mainly in terms of the velocity and the collision parameter for the recolliding particle $k$. We rewrite
\eqref{eqn:recollcond} by shifting the origin to $x(t_k)$ and with $m \in \mathds{Z}^3 $
 \begin{align}\label{eqn:jkrecol}
  (t_{k+1} -t_k) v(t_k) &+...+ (s-t_j)v(t_j)+\varepsilon \tilde{\nu}+m
 = (s-t_k)v_k(t_k).
\end{align}
We are expressing the explicit dependencies on the precollisional velocity  $v_k$ and parameter $\nu_k$ given the precollisional velocity of the tagged particle $v(t_k^-)$:
\begin{align*}
    v_k(t_k)&= v_k - \nu_k \cdot (v_k- v(t_k^-)) \nu_k,\\
    v(t_k)&= v(t_k^-) + \nu_k \cdot (v_k- v(t_k^-)) \nu_k.
 \end{align*}
As the later root velocities only depend on $v(t_k)$, this implies that the left hand side of \eqref{eqn:jkrecol} only depends on $\nu_k \cdot v_k$, while the right hand side only depends on the components of $v_k$ which are perpendicular to $\nu_k$. For fixed $\nu_k \cdot v_k$ and varying $s \in [t_j, t_j+\eta] $ and $\tilde{\nu} \in  \mathbb{S}^2$ the left hand side defines a cylinder of volume proportional to $\eta V(\varepsilon) \varepsilon^2$. We project this cylinder to the plane $\nu_k^\perp$ and its area is bounded by $C\eta V(\varepsilon) \varepsilon$ which yields the constraint for the orthogonal components of $v_k$. The factor  $s-t_k$ can be easily bounded from below by some $\delta>0$ with a probability $1-\delta$ as $k<j$. We estimate again the number of possible $m$ as above.
Then the overall volume of suitable $v_k$ and $\nu_k$ can be estimated
\begin{equation}\label{eqn:jkvolest}
\mathrm{vol}(v_k,\nu_k) \le C \frac{1}{\delta^2}\eta V(\varepsilon) \varepsilon (TV(\varepsilon))^2.
\end{equation}
Summing over all possible $k$ yields a factor $M(\varepsilon)$. 
We take now the probability of the set of re-collisions
\begin{align*}
 &\mathds{P}_t \big(\text{ re-collisions with}  \ n(\Phi) \le M(\varepsilon)  \ and\ \mathcal{V}(\Phi) <V(\varepsilon) \big)
 \\ &\le \sum_{j\ge 0} \mathds{P}_t \big( ( \mathcal{MT}_j \setminus \mathcal{R}(\varepsilon) \big)\\
 &= \mathds{P}_t \big( ( \mathcal{MT}_1 \setminus \mathcal{R}(\varepsilon) \big) + \sum_{j= 2}^{n(\Phi)} \mathds{P}_t \big( ( \mathcal{MT}_j \setminus \mathcal{R}(\varepsilon) \big)\\
 & \le C M(\varepsilon) \varepsilon (T V(\varepsilon))^2  + C \sum_{j= 2}^{n(\Phi)} M(\varepsilon) \left( TV(\varepsilon) \left(\frac{1}{\eta } \right) \left( \frac{\varepsilon}{\eta}\right)^2+  C \left[\frac{1}{\delta^2}\eta V(\varepsilon) +\delta \right]\varepsilon (TV(\varepsilon))^2 \right)\\
 & \le C M(\varepsilon) \left[ 1+ \frac{\eta M(\varepsilon) V(\varepsilon)}{\delta^2} + M(\varepsilon) \delta \right]\varepsilon (T V(\varepsilon))^2 + C (M(\varepsilon))^2 TV(\varepsilon) \left(\frac{1}{\eta } \right) \left( \frac{\varepsilon}{\eta}\right)^2,
 \end{align*}
as required, by using $\eqref{2.4}$, $\eqref{2.6.}$ and $\eqref{eqn:jkvolest}$.
\end{proof}
\subsection{Initial overlap}
We want to estimate the set of initial overlaps. For this, we have the following lemma.

\begin{lemma} \label{lem:overlap}
The probability of initial overlap of the hard spheres of diameter $\varepsilon$ is bounded by
 \begin{align*}
&\mathds{P}_t\left(\{ \Phi \in \mathcal{MT} : n(\Phi) \leq M(\varepsilon) \mbox{ and }  \exists j \in \{1,..., n(\Phi) \} : \ |x_0 - x_j| \le \varepsilon \}\right)\\&\le
C \varepsilon  + C M(\varepsilon)  \varepsilon^{3/2}.\numberthis \label{est:overlap}
\end{align*}
\end{lemma}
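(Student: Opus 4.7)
The plan is to apply a union bound over the background-particle index $j\in\{1,\dots,M(\varepsilon)\}$ and, for each $j$, estimate the $P_t$-probability that the backward-extrapolated initial position of the $j$-th background particle,
\[
x_j=x(t_j)-t_jv_j\pm\varepsilon\nu_j \pmod{\mathds{Z}^3},
\]
lies within $\varepsilon$ of $x_0$ on $\mathds{T}^3$. Conditioning on the tagged-particle history up to the $(j-1)$-th collision, the conditional density of $(t_j,\nu_j,v_j)$ under the idealised evolution $\eqref{eq-id}$ is bounded by $c\,g_0(v_j)[(v(t_j^-)-v_j)\cdot\nu_j]_+$. Writing the torus distance as a minimum over lattice sheets $m\in\mathds{Z}^3$, the overlap constraint restricts $v_j$ to a ball of radius $2\varepsilon/t_j$ in $\mathds{R}^3$ centred at $(x(t_j)-x_0+m)/t_j$, and the good-tree bounds $|v_j|\le V(\varepsilon)$ and $t_j\le T$ restrict the sum to $|m|\le 2TV(\varepsilon)$.

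I would then split the $t_j$-integration at $t_j=\sqrt{\varepsilon}$. For $t_j\ge\sqrt{\varepsilon}$, change variables $v_j\mapsto x_j$ with Jacobian $t_j^{-3}$, so the overlap region becomes an $x_j$-ball of volume $\tfrac{4\pi}{3}\varepsilon^3$ per sheet. Bounding $g_0$ via the admissibility condition $g_0(v)(1+|v|^5)\in L^\infty$ (which makes the sum over distant lattice sheets integrable after replacing it by a Riemann integral) and the rate factor by $CV(\varepsilon)$, and using $\int_{\sqrt{\varepsilon}}^T t_j^{-3}\,dt_j=O(\varepsilon^{-1})$, I obtain an $O(\varepsilon^{3/2})$ contribution per $j$, producing the $CM(\varepsilon)\varepsilon^{3/2}$ term after summing.

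For $t_j<\sqrt{\varepsilon}$ the change-of-variables is singular, so I instead bound the probability of the event that the $j$-th collision occurs before time $\sqrt{\varepsilon}$. For $j=1$ the mandatory collision constraint $(v_0-v_1)\cdot\nu_1>0$ forces
\[
|x_1-x_0|_{\mathds{R}^3}^{2}=\varepsilon^2+2t_1\varepsilon(v_0-v_1)\cdot\nu_1+t_1^2|v_0-v_1|^2>\varepsilon^2,
\]
so $\mathds{T}^3$-overlap requires wrap-around to a nonzero sheet, which in turn demands $t_1\ge 1/(4V(\varepsilon))$ and places us back in the previous regime. For $j\ge 2$, the event $\{t_j<\sqrt{\varepsilon}\}$ requires at least two collisions in $(0,\sqrt{\varepsilon})$; a pair-counting Campbell estimate using the expected-collisions bound $\eqref{2.2}$ applied to consecutive collision gaps gives a probability of $O(\varepsilon)$, bounding the total contribution of this regime.

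The main obstacle is the small-$t_j$ regime for $j\ge 2$: a naive Markov bound on $\mathds{P}_t(n_{[0,\sqrt{\varepsilon}]}\ge 2)$ only yields $O(\sqrt{\varepsilon})$, so one must refine to a second-moment or two-collision estimate, exploiting that consecutive collision gaps are controlled by the post-collisional speed of the tagged particle, whose expected square remains uniformly bounded by Proposition~\ref{1.1.1}. Combining the two regimes and summing over $j$ then yields the claimed $C\varepsilon+CM(\varepsilon)\varepsilon^{3/2}$ bound.
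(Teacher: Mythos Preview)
Your proposal follows essentially the same decomposition as the paper: a union bound over $j$, restriction of $v_j$ to a ball $B_j$ of radius $\varepsilon/t_j$ via $x_j=x(t_j)-t_jv_j$, a split at the threshold $\delta=\sqrt{\varepsilon}$, and for $j\ge 2$ with $t_j<\sqrt{\varepsilon}$ the reduction to the event ``at least two collisions before time $\sqrt{\varepsilon}$''. The obstacle you flag is precisely what the paper resolves via the elementary rate bound $|Q_t^-(\Phi)|\le C(1+|v_0|)$, which directly gives $\mathds{P}(t_2<\delta)\le C^2\delta^2(1+|v_0|)$; integrating over $v_0$ against $f_0$ and setting $\delta=\sqrt{\varepsilon}$ yields the $C\varepsilon$ contribution without invoking Proposition~\ref{1.1.1}.

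There are two minor differences worth noting. First, for $j=1$ the paper does not use your geometric observation that the first collider cannot overlap in the fundamental domain; instead it integrates the ball-volume bound over all collision times,
\[
\int_0^T C\min\Bigl(1,\tfrac{\varepsilon^3}{t^3}\Bigr)\,\d t\le C\varepsilon,
\]
which is shorter and, unlike your wrap-around argument, does not import the velocity cutoff $\mathcal V(\Phi)\le V(\varepsilon)$ that is not part of the lemma's hypothesis. Second, for $t_j\ge\delta$ the paper simply bounds the conditional probability by $\int_{B_j}g_0\le C(\varepsilon/\delta)^3$, without your change of variables and without integrating in $t_j$; this avoids the extra factor of $cV(\varepsilon)$ that your rate-weighted integral would pick up. Your more careful treatment of the torus lattice sheets (using the $|v|^5$ decay of $g_0$ to make the sheet sum summable) is a point the paper glosses over.
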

\begin{proof}
We consider a tree $\Phi \in \mathcal{MT}$ such that $\Phi = ((x_0,v_0),(t_1, \nu_1, v_1),..., (t_n, \nu_n, v_n ) )$, where $(x_0,v_0)\in  \mathds{T}^3\times \mathds{R}^3 $, $t_i \in [0,T]$, $\nu_i \in \mathbb{S}^2$, $v_i \in \mathds{R}^3$, for $i=1,...,n$. The condition for initial overlap of the hard spheres is
\[|x_j(0)- x_0 | \le \varepsilon. \]
Using $x_j(0)= x(t)- t_j v_j$ in the idealised setting, we obtain the condition $v_j \in B_j $ with
\[   B_j= \left\{ v \mid   | v - \frac{x(t)-x_0}{t_j} | \le  \frac{\varepsilon}{t_j} \right\}. \]
Assume $t_j \geq \delta$, then the probability of $v_j \in B_j$ is given by
\begin{align} \label{eqn:overlap1}
    \int_{B_j} g_0(v) \d v \leq C \left( \frac{\varepsilon}{\delta}\right)^3.
\end{align}
The probability to have a tree with two rapid collisions, i.e.~$t_2 <\delta$, can be bounded by estimating the rate of collisions for the root particle with velocity $v_0$
\[|Q^-_t(\Phi)|=\left|\int_{\mathds{R}^3} \int_{\mathbb{S}^2} g_0(\bar{v}) [(v_0-\bar{v})\cdot \omega ]_+  \d{\omega} \d \bar{v}\right| \leq C (1+ |v_0|), \]
due to the moment assumptions on $g_0$. Hence the probability to have two collisions is bounded by
\begin{align}\label{eqn:overlap2}
    C^2 \delta^2 (1+|v_0|).
\end{align}
The probability that the first colliding particle has initial overlap combines the probability of a single collision and overlap  which can be estimated by integrating over all initial velocities $|v_0|$
\begin{align}\label{eqn:overlap3}
\int_0^{T}  C \min\left(1, \frac{\varepsilon^3}{t^3} \right) \d t \le C \varepsilon.
 \end{align}
Setting $\delta= \sqrt{\varepsilon}$ and combining \eqref{eqn:overlap1}, \eqref{eqn:overlap2} and \eqref{eqn:overlap3} yields the result.
\end{proof}

\subsection{Error estimate}
\begin{proposition} \label{2.3.1.}  For any decreasing functions $V, M : (0,\infty) \to [0, \infty)$ such that $\lim_{\varepsilon \to 0}V(\varepsilon) = \lim_{\varepsilon \to 0}M(\varepsilon) = \infty$ and for any $t\in [0,T]$ and $\delta>0$ the probability of the set of bad trees has the following error estimate
\begin{align*}
\mathds{P}_t (\mathcal{MT} \setminus \mathcal{G}(\varepsilon))
& \le
\textnormal{Ov}(\varepsilon)
+
\textnormal{Rec}(\varepsilon)
+
\textnormal{Hi}(\varepsilon)
+
\textnormal{Vel}(\varepsilon),
\end{align*}
where
\begin{enumerate}
\item$\textnormal{Ov}(\varepsilon) \le
C \varepsilon   + C M(\varepsilon) \varepsilon^{3/2}$
is due to the initial overlaps,
\item$\textnormal{Rec}(\varepsilon) \le C M(\varepsilon) \left[ 1+ \frac{\eta M(\varepsilon) V(\varepsilon)}{\delta^2} + M(\varepsilon) \delta \right]\varepsilon (T V(\varepsilon))^2 + C (M(\varepsilon))^2 TV(\varepsilon) \left(\frac{1}{\eta } \right) \left( \frac{\varepsilon}{\eta}\right)^2
$
is due to re-collisions,
\item$ \textnormal{Hi}(\varepsilon) \le   \frac{1}{M(\varepsilon)} \left[
1+ c \pi \left( (\beta + E_0)t 
+ \frac{1}{2 \sqrt{2}} (M_g + \frac{C_g}{E_0}) c t^2 \right) \right] $ is due to the unbounded number of collisions and
\item$  \textnormal{Vel}(\varepsilon) \le
C_g  \frac{ M(\varepsilon) }{V(\varepsilon)^3} + \frac{M_{f_0}}{V(\varepsilon)^2}  + \frac{ M(\varepsilon)}{V(\varepsilon)^2}
\left(1+ E_0^2 + (M_gE_0 +C_g)ct +\left(M_g+ \frac{C_g}{E_0}\right)^2 \frac{c^2 t^2}{2}\right)
$ is due to the unbounded velocities,
 \end{enumerate}
 where $ C_g :=  \int_{\mathds{R}^3} g_0(v) |v|^3 \ \d v $ and $ M_{f_0} : = \int_{\mathds{T}^3 \times \mathds{R}^3 \setminus B(0,V(\varepsilon)) } f_0(x,v) |v|^2 \, \d x\, \d v,$ for $t\in [0,T]$.
\end{proposition}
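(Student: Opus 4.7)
The strategy is a union bound over the ways a tree can fail to be good, treating each of the four defining conditions of $\mathcal{G}(\varepsilon)$ separately. First I would write
\begin{align*}
\mathcal{MT} \setminus \mathcal{G}(\varepsilon)
\subseteq
\big(\mathcal{MT}\setminus S(\varepsilon)\big) \cup
\big(\mathcal{MT}\setminus R(\varepsilon)\big) \cup
\{n(\Phi) > M(\varepsilon)\} \cup
\{\mathcal{V}(\Phi) \geq V(\varepsilon)\},
\end{align*}
where each event is further intersected with $\{n(\Phi) \leq M(\varepsilon)\}$ for the first two pieces when invoking the earlier estimates (the grazing set has measure zero and can be discarded).

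For $\textnormal{Ov}(\varepsilon)$ and $\textnormal{Rec}(\varepsilon)$ I would apply Lemma \ref{lem:overlap} and Proposition \ref{2.1.2.} directly; these already yield the claimed bounds. For $\textnormal{Hi}(\varepsilon)$ the natural tool is Markov's inequality: since $\mathbb{E}(n(\Phi))(t)$ has the bound given in \eqref{2.2},
\begin{align*}
\mathds{P}_t(n(\Phi) > M(\varepsilon))
\leq \frac{\mathbb{E}(n(\Phi))(t)}{M(\varepsilon)},
\end{align*}
which, after substituting the estimate from \eqref{2.2}, gives the claimed $\textnormal{Hi}(\varepsilon)$.

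The most delicate piece is $\textnormal{Vel}(\varepsilon)$, since $\mathcal{V}(\Phi)$ is the maximum over both background velocities appearing at collisions and tagged-particle velocities on the whole time interval. I would split the event $\{\mathcal{V}(\Phi) \geq V(\varepsilon)\} \cap \{n(\Phi) \leq M(\varepsilon)\}$ into three subcases: (a) some background velocity $|v_j|$ exceeds $V(\varepsilon)$; (b) the initial tagged velocity $|v_0|$ exceeds $V(\varepsilon)$; (c) the tagged velocity at some post-collision piece $|v(s)|$ exceeds $V(\varepsilon)$. For (a), union-bound over the at most $M(\varepsilon)$ background particles and, conditioned on a collision, apply the third-moment Markov bound $\int_{|v|\geq V(\varepsilon)} g_0(v)\,\d v \leq C_g/V(\varepsilon)^3$, which produces the first term. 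For (b), Markov at order two directly against $f_0$ gives the tail $M_{f_0}/V(\varepsilon)^2$. For (c), I would use that the root marginal of $P_t$ solves the linear Boltzmann equation, so for each of the $n(\Phi) \leq M(\varepsilon)$ pieces the contribution is bounded by $V(\varepsilon)^{-2} \int f_s(x,v)|v|^2\,\d x\,\d v$ via Markov, and then Proposition \ref{1.1.1} controls $\int f_t |v|^2$ uniformly in $s\leq t$, yielding the third term with the factor $M(\varepsilon)$ from the union over collision intervals.

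The main obstacle is subcase (c): one must justify that a Markov-type bound applies to the tagged-particle velocity throughout the history, not just at fixed times, and that the moment bound from Proposition \ref{1.1.1} suffices uniformly on $[0,t]$. The key observation is that the velocity of the tagged particle changes only at finitely many collision times, so on each of the at most $M(\varepsilon)+1$ intervals the velocity is constant and equal to the root marginal at the starting time of that interval, which is bounded by $M_f(s)$; summing over the $M(\varepsilon)$ intervals and using monotonicity of the quadratic moment in $t$ gives the factor $M(\varepsilon)$ multiplying the bound from Proposition \ref{1.1.1}. Assembling the four contributions finishes the proof.
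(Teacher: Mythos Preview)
Your proof plan is correct and follows essentially the same route as the paper: a union bound over the four failure modes, with $\textnormal{Ov}$ and $\textnormal{Rec}$ read off from Lemma~\ref{lem:overlap} and Proposition~\ref{2.1.2.}, $\textnormal{Hi}$ from Markov combined with \eqref{2.2}, and $\textnormal{Vel}$ from the three-way split (background velocities via the third moment of $g_0$, initial tagged velocity via the second moment of $f_0$, and post-collision tagged velocities via the root marginal $f_s$ and Proposition~\ref{1.1.1}). One small refinement: when invoking Proposition~\ref{2.1.2.} you need to intersect the re-collision event with \emph{both} $\{n(\Phi)\le M(\varepsilon)\}$ and $\{\mathcal{V}(\Phi)<V(\varepsilon)\}$, not just the former, exactly as the paper does in its decomposition.
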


\begin{proof}

By the inclusion-exclusion principle we obtain
\begin{align*}
\mathds{P}_t (\mathcal{MT} \setminus \mathcal{G}(\varepsilon))& \le \mathds{P}_t (\mathcal{MT}\setminus S(\varepsilon)) + \mathds{P}_t ( n(\Phi)>M(\varepsilon) )\\ & +  \mathds{P}_t \big( ( \mathcal{MT} \setminus \mathcal{G}(\varepsilon))\cap \{ \Phi :\ \Phi \in S(\varepsilon)\ \mathrm{with} \ n(\Phi) \le M(\varepsilon)  \ and\ \mathcal{V}(\Phi) <V(\varepsilon) \} \big)\\
& + \mathds{P}_t \big( \mathcal{V}(\Phi)>V(\varepsilon) \ \mathrm{with} \ n(\Phi) \le M(\varepsilon) \big).
\end{align*}
We observe that the third term can be written as
\begin{align*}
 \mathds{P}_t \big( ( \mathcal{MT} \setminus \mathcal{G}(\varepsilon))\cap \{ \Phi :\ & \Phi \in S(\varepsilon)\ \mathrm{with} \ n(\Phi) \le M(\varepsilon) \} \big)
 \\ &= \mathds{P}_t \big(\text{
 re-collisions \ with} \ n(\Phi) \le M(\varepsilon) \ and\ \mathcal{V}(\Phi) <V(\varepsilon)\big).\numberthis \label{1.6}
 \end{align*}
Now, by the Markov's inequality we have 
\begin{align*}
\mathds{P}_t \big( n(\Phi)  >M(\varepsilon) \big) &\le \frac{\mathbb{E}(n(\Phi))}{M(\varepsilon)}\\
& \le \frac{1}{M(\varepsilon)} \left[
1+ c \pi \left( (\beta + E_0)t 
+ \frac{1}{2 \sqrt{2}} (M_g + \frac{C_g}{E_0}) c t^2 \right)
 \right]. \numberthis \label{2.9}
\end{align*}
Now, we want to find a bound for the probability $\mathds{P}_t \big( \mathcal{V}(\Phi)>V(\varepsilon) \ \mathrm{with} \ n(\Phi) \le M(\varepsilon) \big)$, where $\mathcal{V}(\Phi)$ is the maximum velocity on the history $\Phi$ and $V$ is a decreasing function of $\varepsilon$ with $\lim_{\varepsilon \to 0}V(\varepsilon) = \infty.$ Let us consider for the background particles that the velocity $v_j$ is i.i.d. with respect to $g_0$ and assume furthermore that $\int_{\mathds{R}^3} g_0(v) |v|^3 \ \d v =: C_g.$ Then
\begin{align*}
\int_{\mathds{R}^3 \setminus B(0,V(\varepsilon))} g_0(v) \, \d v \le \frac{1}{V(\varepsilon)^3} \int_{\mathds{R}^3 \setminus B(0,V(\varepsilon))} g_0(v) |v|^3 \, \d v \le \frac{C_g}{V(\varepsilon)^3}.
\end{align*}
Thus
\begin{align*}
\mathds{P}_t \big( |v_j| \le  V(\varepsilon)  \big)= 1 - \mathds{P}_t \big( |v_j| >  V(\varepsilon)  \big) \ge 1 - \frac{C_g}{V(\varepsilon)^3}M(\varepsilon).
\end{align*}
Let us consider for the tagged particles that the initial velocity $v_0$ is i.i.d. with respect to $f_0$ and define furthermore  $$\int_{\mathds{T}^3 \times \mathds{R}^3 \setminus B(0,V(\varepsilon)) } f_0(x,v) |v|^2 \, \d x\, \d v =: M_{f_0} \ \textrm{and} \  \int_{\mathds{T}^3 \times \mathds{R}^3 \setminus B(0,V(\varepsilon))} f_t(x,v) |v|^2 \, \d x \, \d v =: \tilde{M}_f(t),$$ for $t\in [0,T]$. Then
\begin{align*}
\int_{\mathds{T}^3 \times \mathds{R}^3 \setminus B(0,V(\varepsilon))} f_0(x,v) \, \d x \, \d v \le \frac{1}{V(\varepsilon)^2} \int_{ \mathds{T}^3 \times \mathds{R}^3 \setminus B(0,V(\varepsilon))} f_0(v) |v|^2 \, \d x \, \d v = \frac{M_{f_0}}{V(\varepsilon)^2}
\end{align*}
and for $t \in [0,T]$
\begin{align*}
\int_{ \mathds{T}^3 \times \mathds{R}^3 \setminus B(0,V(\varepsilon))  } f_t(x,v) \, \d v \le \frac{1}{V(\varepsilon)^2} \int_{ \mathds{T}^3 \times \mathds{R}^3 \setminus B(0,V(\varepsilon)) } f_t(x,v) |v|^2 \,  \d x \, \d v = \frac{\tilde{M}_f(t)}{V(\varepsilon)^2}.
\end{align*}
Thus
\begin{align*}
\mathds{P}_t \big( & |v_0| >  V(\varepsilon) \ \mathrm{or} \ |v_1|> V(\varepsilon)\ \mathrm{or}\ ...\ \mathrm{or} \ |v_{n(\Phi)}| >  V(\varepsilon) \big)\\
&\le \mathds{P}_t \big( |v_0| >  V(\varepsilon) \big) +   \mathds{P}_t \big( |v_1|> V(\varepsilon) \big)
+ ... +  \mathds{P}_t \big( |v_{n(\Phi)}| >  V(\varepsilon) \big)\\
& \le \frac{M_{f_0}}{V(\varepsilon)^2} + n(\Phi) \frac{\tilde{M}_f(t)}{V(\varepsilon)^2}, \quad t \in [0,T].
\end{align*}
Hence
\begin{align*}
\mathds{P}_t \big( &\mathcal{V}(\Phi)>V(\varepsilon) \ \mathrm{with} \ n(\Phi)  \le M(\varepsilon) \big)\\ &\le \mathds{P}_t \big( |v_j|>V(\varepsilon) \big) +  \mathds{P}_t \big( |v(s)|>V(\varepsilon) \big)\\
&\le C_g  \frac{ M(\varepsilon) }{V(\varepsilon)^3} + \frac{M_{f_0}}{V(\varepsilon)^2} +  \tilde{M}_f(t) \frac{ M(\varepsilon)  }{V(\varepsilon)^2}\\
&\le
C_g  \frac{ M(\varepsilon) }{V(\varepsilon)^3} + \frac{M_{f_0}}{V(\varepsilon)^2} 
+ \frac{ M(\varepsilon)}{V(\varepsilon)^2}
\left(1+ E_0^2 + (M_gE_0 +C_g)ct +\left(M_g+ \frac{C_g}{E_0}\right)^2 \frac{c^2 t^2}{2}\right),
\numberthis \label{2.11.}
\end{align*}
by Proposition \ref{1.1.1}.
Therefore, by summing \eqref{eqn:recol}, \eqref{est:overlap}, \eqref{2.9},  and \eqref{2.11.} we obtain
 \begin{align*}
\mathds{P}_t (&\mathcal{MT} \setminus \mathcal{G}(\varepsilon))\\
& \le
C \varepsilon +  C M(\varepsilon)  \varepsilon^{3/2} \\&
\quad+  \frac{1}{M(\varepsilon)} \left[
1+ c \pi \left( (\beta + E_0)t 
+ \frac{1}{2 \sqrt{2}} (M_g + \frac{C_g}{E_0}) c t^2 \right)
 \right]\\
& \quad+
C M(\varepsilon) \left[ 1+ \frac{\eta M(\varepsilon)V(\varepsilon)}{\delta^2} + M(\varepsilon)\delta \right]\varepsilon (T V(\varepsilon))^2 + C (M(\varepsilon))^2 TV(\varepsilon) \left(\frac{1}{\eta } \right) \left( \frac{\varepsilon}{\eta}\right)^2
\\  &
\quad +
C_g  \frac{ M(\varepsilon) }{V(\varepsilon)^3} + \frac{M_{f_0}}{V(\varepsilon)^2}
 \quad + \frac{ M(\varepsilon)}{V(\varepsilon)^2}
\left(1+ E_0^2 + (M_gE_0 +C_g)ct +\left(M_g+ \frac{C_g}{E_0}\right)^2 \frac{c^2 t^2}{2}\right). \numberthis \label{Highterms}
\end{align*}
\end{proof}
\subsection{Quantitative error estimates}

Here, we introduce some extra notation. For $\varepsilon >0$, $\Phi \in \mathcal{G((\varepsilon)}$, $t\in [0,T]$, define
\begin{align*}
\eta^{\varepsilon}_t (\Phi) & := \int_{\mathds{T}^3 \times \mathds{R}^3} g_0 (\bar{v}) (1- \mathds{1}^{\varepsilon}_t[\Phi] (\bar{x}, \bar{v})) \,  \d \bar{x} \,  \d \bar{v},\\
R^{\varepsilon}_t (\Phi) &:= \zeta(\varepsilon) P_t(\Phi),\\
L(\Phi) & := - \int_{\mathbb{S}^2} \int_{\mathds{R}^3} g_0(\bar{v}) [(v(\tau) - \bar{v}) \cdot \nu]_+ \,  \d \bar{v} \,  \d \nu ,\\
C(\Phi) & := 2 \sup_{t\in [0,T]} \left\{  \int_{\mathbb{S}^2} \int_{\mathds{R}^3} g_0(\bar{v}) [(v(t) - \bar{v}) \cdot \nu]_+ \,  \d \bar{v} \,  \d \nu \right \} \\
\rho ^{\varepsilon , 0} _t (\Phi) & := \eta^{\varepsilon}_t (\Phi) C(\Phi) t.
\end{align*}
Further for $k \ge 1$ define,
\begin{align*}
\rho ^{\varepsilon , k} _t (\Phi) & := (1-\varepsilon) \rho ^{\varepsilon , k-1} _t (\Phi) + \rho ^{\varepsilon , 0} _t (\Phi) +\varepsilon,
\end{align*}
and, furthermore, define
\begin{align*}
\hat{\rho }^{\varepsilon } _t (\Phi) & := \rho ^{\varepsilon , n(\Phi)} _t (\Phi).
\end{align*}

Now, we want to find a bound for the difference $\hat{P_t} - P_t$. For this, we have the following lemma.

\begin{lemma} \label{2.4.3.} For any $\Phi \in \mathcal{G(\varepsilon)}$ and $\varepsilon$ sufficiently small,
\begin{align*}
\hat{\rho }^{\varepsilon } _t (\Phi)
&\le
C_1 C_2(1+M(\varepsilon)) T \varepsilon^2 (\beta + V(\varepsilon))^2 + M(\varepsilon) \varepsilon.
\end{align*}
\end{lemma}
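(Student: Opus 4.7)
The plan is to reduce the inequality to elementary estimates on $\rho^{\varepsilon,0}_t(\Phi)$ and $C(\Phi)$, after unwinding the linear recursion that defines $\hat{\rho}^{\varepsilon}_t$.

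First I would integrate the recursion in closed form: iterating $\rho^{\varepsilon,k}_t = (1-\varepsilon)\rho^{\varepsilon,k-1}_t + (\rho^{\varepsilon,0}_t + \varepsilon)$ yields
\begin{equation*}
\rho^{\varepsilon,k}_t(\Phi) = (1-\varepsilon)^k\rho^{\varepsilon,0}_t(\Phi) + \frac{1-(1-\varepsilon)^k}{\varepsilon}\bigl(\rho^{\varepsilon,0}_t(\Phi)+\varepsilon\bigr).
\end{equation*}
Bernoulli's inequality gives $1-(1-\varepsilon)^k\le k\varepsilon$ and $(1-\varepsilon)^k\le 1$, so $\rho^{\varepsilon,k}_t(\Phi)\le (1+k)\rho^{\varepsilon,0}_t(\Phi) + k\varepsilon$. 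Since $\Phi\in\mathcal{G}(\varepsilon)$ forces $n(\Phi)\le M(\varepsilon)$, taking $k=n(\Phi)$ reduces the lemma to proving $\rho^{\varepsilon,0}_t(\Phi)\le C\,T\,\varepsilon^2(\beta+V(\varepsilon))^2$ for a uniform constant $C$.

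I would next bound the two factors of $\rho^{\varepsilon,0}_t(\Phi) = \eta^{\varepsilon}_t(\Phi)\,C(\Phi)\,t$ independently. For $C(\Phi)$, the pointwise estimate $[(v(t)-\bar v)\cdot\nu]_+\le |v(t)|+|\bar v|$, combined with $\mathcal V(\Phi)\le V(\varepsilon)$ and the finite first moment of $g_0$ coming from admissibility, immediately gives $C(\Phi)\le C(\beta+V(\varepsilon))$ after integrating over $\nu\in\mathbb{S}^2$.

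The substantive step is the geometric tube estimate on $\eta^{\varepsilon}_t(\Phi)$. By definition, $1-\mathds{1}^{\varepsilon}_t[\Phi](\bar x,\bar v)$ is the indicator that the straight trajectory $s\mapsto \bar x+s\bar v$ comes within $\varepsilon$ of the piecewise linear tagged trajectory at some $s\in(0,t)$. I would split the tagged trajectory into its $n(\Phi)+1$ ballistic segments: on the $i$-th segment of duration $\Delta t_i$ and constant velocity $v(t_i)$, for fixed $\bar v$ the admissible initial positions $\bar x$ lift to a cylindrical tube of cross-section $\pi\varepsilon^2$ and length $\Delta t_i|v(t_i)-\bar v|$. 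Summing over the $n(\Phi)+1$ segments, projecting to $\mathds{T}^3$, and integrating against $g_0(\bar v)\,\d\bar v$ produces
\begin{equation*}
\eta^{\varepsilon}_t(\Phi) \le \pi\varepsilon^2\int_{\mathds{R}^3}g_0(\bar v)\sum_i\Delta t_i(V(\varepsilon)+|\bar v|)\,\d\bar v \le C\varepsilon^2 t(\beta+V(\varepsilon)).
\end{equation*}

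Multiplying this with the bound on $C(\Phi)$ and absorbing one factor of $t\le T$ into the universal constant $C_2$ gives $\rho^{\varepsilon,0}_t(\Phi)\le C_1 C_2\,T\varepsilon^2(\beta+V(\varepsilon))^2$; substituting into the recursion bound produces the claim. The main obstacle is the tube estimate: it relies on $\Phi\in\mathcal{G}(\varepsilon)$ being non-grazing so the segments are well-defined straight lines, on $n(\Phi)\le M(\varepsilon)$ to keep the number of tubes finite, and on the fact that tubes from different segments union-bound without additional loss. Everything else reduces to the closed-form recursion together with moment bounds on $g_0$ supplied by admissibility.
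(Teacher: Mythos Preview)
Your approach is essentially the paper's: unwind the affine recursion to reduce to a bound on $\rho^{\varepsilon,0}_t(\Phi)$, then use $n(\Phi)\le M(\varepsilon)$. The paper simply quotes \cite[Lemma~5.6]{Matthies2018} for $\rho^{\varepsilon,0}_t(\Phi)\le C_1C_2\,T\varepsilon^2(\beta+V(\varepsilon))^2$, whereas you supply the tube/moment argument behind it; the recursion manipulation is identical up to writing the geometric sum in closed form versus bounding each term by $1$.

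One slip to flag: your own estimates give $\eta^{\varepsilon}_t(\Phi)\le C\varepsilon^2 t(\beta+V(\varepsilon))$ and $C(\Phi)\le C(\beta+V(\varepsilon))$, so $\rho^{\varepsilon,0}_t=\eta^{\varepsilon}_t\,C(\Phi)\,t$ carries a factor $t^2$, not $t$. The phrase ``absorbing one factor of $t\le T$ into the universal constant $C_2$'' is not legitimate here, since in this paper $T=T_\varepsilon$ diverges and $C_1,C_2$ are meant to be uniform. The paper sidesteps this by citing the external lemma (proved for fixed $T$), so the same issue is arguably latent there; in any case the resulting term is subleading in the balancing of Section~\ref{sec:H} and does not affect Theorem~\ref{thm:1.1.}, but as written your argument yields $C_1C_2(1+M(\varepsilon))T^2\varepsilon^2(\beta+V(\varepsilon))^2+M(\varepsilon)\varepsilon$ rather than the stated linear-in-$T$ bound.
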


\begin{proof}
Firstly, by \cite[Lemma 5.6]{Matthies2018} we obtain
\begin{align}\label{rho0}
\rho ^{\varepsilon , 0} _t (\Phi) & = \eta^{\varepsilon}_t (\Phi) C(\Phi) t \le C_1 C_2 T \varepsilon^2 (\beta + V(\varepsilon))^2.
\end{align}
Then for any $\Phi \in \mathcal{G(\varepsilon)}$,
\begin{align*}
\hat{\rho }^{\varepsilon } _t (\Phi)
&= \rho^{\varepsilon, n(\Phi)}_t = (1-\varepsilon)^{n(\Phi)} \rho^{\varepsilon, 0}_t(\Phi) + (\rho^{\varepsilon, 0}_t(\Phi) + \varepsilon) \sum_{ j=1}^{n(\Phi)} (1-\varepsilon)^{n(\Phi) -j} \\
& \le
\rho^{\varepsilon, 0}_t(\Phi) + (\rho^{\varepsilon, 0}_t(\Phi) + \varepsilon) \times n(\Phi)
\le \rho^{\varepsilon, 0}_t(\Phi) + (\rho^{\varepsilon, 0}_t(\Phi) + \varepsilon) M(\varepsilon)\\
&\le
C_1 C_2(1+M(\varepsilon)) T \varepsilon^2 (\beta + V(\varepsilon))^2 + M(\varepsilon) \varepsilon.
\end{align*}
as required by using \eqref{rho0}.
\end{proof}
\begin{theorem} \label{ThP} There are uniform constants $C$, $C_1$ and $C_2$ such that for every $t \in [0,T_{\varepsilon}]$ and $S \subset \mathcal{MT}$ measurable,
\begin{align*}
 &\sup_{S \subset \mathcal{MT}}|P_t(S)- \hat{P_t}(S)|\\
& \le  \frac{4}{3} \pi c \varepsilon
+C_1 C_2(1+M(\varepsilon)) T \varepsilon^2 (\beta + V(\varepsilon))^2 + M(\varepsilon) \varepsilon
+ C \varepsilon +  C M(\varepsilon)  \varepsilon^{3/2}\\&
\quad +  \frac{1}{M(\varepsilon)} \left[
1+ c \pi \left( (\beta + E_0)t 
+ \frac{1}{2 \sqrt{2}} (M_g + \frac{C_g}{E_0}) c t^2 \right)
 \right]\\
& \quad+
C M(\varepsilon) \left[ 1+ \frac{\eta M(\varepsilon)V(\varepsilon)}{\delta^2} + M(\varepsilon)\delta \right]\varepsilon (T V(\varepsilon))^2 + C (M(\varepsilon))^2 TV(\varepsilon) \left(\frac{1}{\eta } \right) \left( \frac{\varepsilon}{\eta}\right)^2
\\ &
\quad +
C_g  \frac{ M(\varepsilon) }{V(\varepsilon)^3} + \frac{M_{f_0}}{V(\varepsilon)^2} + \frac{ M(\varepsilon)}{V(\varepsilon)^2}
\left(1+ E_0^2 + (M_gE_0 +C_g)ct +\left(M_g+ \frac{C_g}{E_0}\right)^2 \frac{c^2 t^2}{2}\right).\numberthis \label{HOT}
\end{align*}
\end{theorem}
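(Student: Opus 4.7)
The plan is to split the total-variation distance into a good-tree contribution and a bad-tree contribution, estimate each separately, and then absorb the overall normalisation error introduced by $\zeta(\varepsilon)$.

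For any measurable $S \subset \mathcal{MT}$ I would first write
\begin{align*}
|P_t(S) - \hat{P}_t(S)| \le \big|P_t(S \cap \mathcal{G}(\varepsilon)) - \hat{P}_t(S \cap \mathcal{G}(\varepsilon))\big| + P_t(\mathcal{MT}\setminus \mathcal{G}(\varepsilon)) + \hat{P}_t(\mathcal{MT}\setminus \mathcal{G}(\varepsilon)).
\end{align*}
The last two terms are controlled directly by Proposition \ref{2.3.1.}, which yields the four Ov, Rec, Hi and Vel groups appearing on the right-hand side of \eqref{HOT}. For $\hat{P}_t$ the same geometric arguments apply to the physical dynamics, or equivalently this bound can be obtained by bootstrapping from the comparison on $\mathcal{G}(\varepsilon)$ once it has been proved.

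To handle the good-tree term I would introduce the rescaled idealised density $R^{\varepsilon}_t(\Phi) := \zeta(\varepsilon) P_t(\Phi)$, so that $R^{\varepsilon}_0$ matches $\hat{P}_0$ exactly. Comparing \eqref{eq-id} with \eqref{3} on $\mathcal{G}(\varepsilon)$, the empirical operator $\hat{\mathcal{Q}}_t$ differs from $\mathcal{Q}_t$ in two places: the denominator $\int g_0(\bar v) \mathds{1}^{\varepsilon}_t[\Phi] \, \d \bar x \d \bar v = 1 - \eta^{\varepsilon}_t(\Phi)$ replaces $1$, producing a relative error controlled by $\rho^{\varepsilon,0}_t(\Phi)$, while the rate correction $(c-\gamma(t))$ together with the $\hat{C}(\varepsilon)$ subtraction contributes an additional $O(\varepsilon)$ per collision node. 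A Duhamel expansion of $\hat{P}_t - R^{\varepsilon}_t$ along the branches of the tree, iterated once per collision, produces a pointwise bound
\begin{align*}
|\hat{P}_t(\Phi) - R^{\varepsilon}_t(\Phi)| \le \hat{\rho}^{\varepsilon}_t(\Phi) \, P_t(\Phi),
\end{align*}
where the induction is precisely the recursion used to pass from $\rho^{\varepsilon,k-1}_t$ to $\rho^{\varepsilon,k}_t$, with the $(1-\varepsilon)$ factor absorbing the rate correction and the additive $\rho^{\varepsilon,0}_t(\Phi) + \varepsilon$ coming from $\eta^{\varepsilon}_t$ and $\hat{C}(\varepsilon)$.

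Integrating this bound over $S \cap \mathcal{G}(\varepsilon)$ and applying the triangle inequality gives
\begin{align*}
|P_t(S\cap\mathcal{G}) - \hat{P}_t(S\cap\mathcal{G})| \le (1-\zeta(\varepsilon)) + \sup_{\Phi \in \mathcal{G}(\varepsilon)} \hat{\rho}^{\varepsilon}_t(\Phi).
\end{align*}
Bernoulli's inequality applied to \eqref{2.19.} combined with the Boltzmann--Grad scaling $N\varepsilon^2 = c$ gives $1 - \zeta(\varepsilon) \le \tfrac{4}{3}\pi N \varepsilon^3 = \tfrac{4}{3}\pi c \varepsilon$, which accounts for the leading term in \eqref{HOT}, and Lemma \ref{2.4.3.} controls the supremum to produce the second line. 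Summing with the Ov, Rec, Hi, Vel contributions from Proposition \ref{2.3.1.} yields exactly \eqref{HOT}. The main obstacle is the pointwise inductive bound $|\hat{P}_t - R^{\varepsilon}_t| \le \hat{\rho}^{\varepsilon}_t P_t$: errors must be propagated uniformly through every collision along the tree, which is precisely why the good-tree truncation $n(\Phi) \le M(\varepsilon)$ enters multiplicatively in $\hat{\rho}^{\varepsilon}_t$. This multiplicative accumulation is what forces the joint tuning of $M(\varepsilon), V(\varepsilon), \eta, \delta$ and the time horizon $T_{\varepsilon}$ that underlies the admissible scaling $c^{84/103} T = \varepsilon^{52\alpha/103 - 11/103}$ in Theorem \ref{thm:1.1.}.
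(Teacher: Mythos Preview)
Your proposal is correct and matches the paper's approach: the paper's own proof simply cites \cite[Lemma 5.7]{Matthies2018} together with Lemma \ref{2.4.3.} and Proposition \ref{2.3.1.}, and you have accurately reconstructed the content of that cited lemma---the good/bad tree split, the comparison of $\hat P_t$ with $R^\varepsilon_t=\zeta(\varepsilon)P_t$ via the recursive bound $|\hat P_t-R^\varepsilon_t|\le \hat\rho^\varepsilon_t P_t$, and the Bernoulli estimate $1-\zeta(\varepsilon)\le \tfrac{4}{3}\pi c\varepsilon$. One small clarification: the alternative you call ``the same geometric arguments apply to the physical dynamics'' for bounding $\hat P_t(\mathcal{MT}\setminus\mathcal G(\varepsilon))$ is not how it is done---Proposition \ref{2.3.1.} is written for the idealised $P_t$ only---so the bootstrapping route you mention (using $\hat P_t(\mathcal{MT}\setminus\mathcal G)=1-\hat P_t(\mathcal G)$ and the already-established good-tree comparison) is the one actually used.
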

\begin{proof}
By using the proof of \cite[Lemma 5.7]{Matthies2018}, the previous Lemma \ref{2.4.3.} and Proposition \ref{2.3.1.}, the above estimate follows.
\end{proof}
\subsection{  Making time $T$ large  }\label{sec:H}
The aim of this subsection is to make time $T$ large, as $\varepsilon$ tends to be very small. We are doing this by equating the leading order terms of the bound of the probability of bad trees and making them small. Therefore we have the following lemma. 
\begin{lemma}
We can choose the parameter $c$ and the time $T$ in Theorem $\ref{ThP}$ to be of the form
\begin{align*}
c^{\frac{84}{103}}T = \varepsilon^{\frac{52\alpha}{103} -\frac{11}{103}}
\end{align*} which becomes large, as $\varepsilon$ becomes small, for the values of $\alpha$ between $ 0 < \alpha < \frac{11}{52}.$
\end{lemma}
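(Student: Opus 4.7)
The plan is to view the estimate of Theorem~\ref{ThP} as a linear programme in the exponents of $\varepsilon$. Write every free quantity as a power of $\varepsilon$: set $M(\varepsilon)=\varepsilon^{-m}$, $V(\varepsilon)=\varepsilon^{-v}$, $\eta=\varepsilon^{a}$, $\delta=\varepsilon^{b}$, $c=\varepsilon^{-\gamma}$ and $T=\varepsilon^{-\tau}$, with $m,v\geq 0$ large (so the cut-offs diverge) and $\gamma,\tau\geq 0$ (so $c,T\geq 1$). Substituting into the right-hand side of \eqref{HOT} turns every summand into a power of $\varepsilon$, and demanding each summand be bounded by $\varepsilon^\alpha$ yields a finite system of linear inequalities in $(m,v,a,b,\gamma,\tau)$. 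The task reduces to determining, for a fixed target error exponent $\alpha>0$, the extremal pair $(\gamma,\tau)$ for which this system is feasible; equivalently, the largest value of $\tfrac{84}{103}\gamma+\tau$ for which some valid choice of cut-offs exists.

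Inspection shows that the constraints couple in a specific way. The ``cheap'' summands $c\varepsilon$, $M\varepsilon$, $M\varepsilon^{3/2}$, $M/V^3$ and $M_{f_0}/V^2$ yield only standalone bounds ($\gamma\le 1-\alpha$, $m\le 1-\alpha$, $2v-m\ge\alpha$, and so on), which remain slack at the optimum. Equality of the two $c^2T^2$-terms, namely the collision-count term $c^2T^2/M$ coming from part~(3) of Proposition~\ref{2.3.1.} and the velocity cut-off term $Mc^2T^2/V^2$ from part~(4), forces $V\sim M$, i.e.~$v=m$. The genuinely binding constraints are then the leading recollision estimate $MT^2V^2\varepsilon\sim\varepsilon^{\,1-3m-2\tau}$, the two interior recollision contributions $\eta M^2V^3T^2\varepsilon/\delta^2$ and $M^2TV\varepsilon^3/\eta^3$ (whose simultaneous balance determines $a$ and $b$), the $\delta$-summand $M^2V^2T^2\delta\varepsilon$, together with the combined $c^2T^2/M$ constraint that governs the trade-off between $\gamma$ and $\tau$. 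The empirical-versus-ideal discrepancy $MT\varepsilon^2V^2$ of Lemma~\ref{2.4.3.} is dominated by the first recollision term and contributes no new binding inequality.

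Treating these binding inequalities as equalities and eliminating $m,a,b$ produces a single linear relation among $\gamma$, $\tau$ and $\alpha$. Carrying out the elimination yields
\[
\tfrac{84}{103}\,\gamma + \tau \;=\; \tfrac{11-52\alpha}{103},
\]
which is exactly $c^{84/103}T=\varepsilon^{(52\alpha-11)/103}$. For $0<\alpha<11/52$ the right-hand exponent is negative, so there exist admissible $(\gamma,\tau)$ with $\gamma,\tau\geq 0$ on this curve making $c^{84/103}T\to\infty$ as $\varepsilon\to 0$, while the full error in Theorem~\ref{ThP} remains $O(\varepsilon^\alpha)$.

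The genuine obstacle is the combinatorics of the linear programme rather than any analytical step: with roughly a dozen summands in \eqref{HOT}, one must verify that the chosen sextet of active constraints is in fact optimal, that all slack constraints (including the positivity conditions $m,v,a,b\ge 0$ needed for the cut-offs and rate parameters to degenerate in the correct directions) are preserved, and that no sign is flipped in the elimination. The unusual denominator $103$ in the final relation is produced directly by this elimination, essentially as the common multiple arising from matching the three recollision-related exponents against the $c^2T^2$-cluster.
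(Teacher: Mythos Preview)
Your approach is essentially the paper's own: the paper picks out five leading-order summands in \eqref{HOT}, equates them, solves the resulting system for $M,V,\eta,\delta$ in terms of $c,T,\varepsilon$, and then sets the common value equal to $\varepsilon^\alpha$. That is exactly your linear programme in the exponents, with ``binding constraints taken at equality'' playing the role of the paper's balance equation \eqref{balance}.

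Two points in your list of active constraints do not match the paper and would derail the elimination if taken literally. First, the term $MT^2V^2\varepsilon$ (the ``$1$'' inside the recollision bracket) is \emph{not} binding at the optimum: with the paper's choices one has $M\delta = c^{6/13}T^{-1/13}\varepsilon^{-2/13}\to\infty$, so the $M\delta$-summand strictly dominates the $1$-summand and $MT^2V^2\varepsilon \ll \varepsilon^\alpha$. If you impose it as an equality alongside the others, the system becomes overdetermined and you no longer get a one-parameter curve in $(\gamma,\tau)$ but (at best) an isolated point with $\gamma<0$. Second, the last recollision contribution is $M^2TV\varepsilon^2/\eta^3$, not $\varepsilon^3/\eta^3$. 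The five terms actually balanced in the paper are
\[
\frac{Mc^2T^2}{V^2},\quad \frac{c^2T^2}{M},\quad \varepsilon M^2T^2V^2\delta,\quad \frac{\varepsilon M^2V^3\eta T^2}{\delta^2},\quad \frac{M^2TV\varepsilon^2}{\eta^3}.
\]
With these two corrections your elimination reproduces the paper's computation verbatim and yields the stated relation. Your closing caveat about verifying which constraints are genuinely active is exactly the point where the slip occurred.
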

\begin{proof}
In this step, we want to make the time $cT$ large such that $\lim_{ \varepsilon \to 0} cT(\varepsilon) = \infty.$
By considering only the leading order terms in \eqref{HOT} we have
 \begin{align}\label{balance}
\frac{M(\varepsilon) c^2 T^2}{V(\varepsilon)^2}
=
\frac{c^2T^2}{M(\varepsilon)}
=
\varepsilon M(\varepsilon)^2 T^2 V(\varepsilon)^2 \delta
=
\frac{\varepsilon M(\varepsilon)^2 V(\varepsilon)^3 \eta T^2 }{\delta^2}
=
\frac{M(\varepsilon)^2 TV(\varepsilon)\varepsilon^2}{\eta^3}.
\end{align}
We solve this system
\begin{align}\label{M}
\frac{M(\varepsilon) c^2 T^2}{V(\varepsilon)^2}
=
\frac{c^2T^2}{M(\varepsilon)}
&\Leftrightarrow
M(\varepsilon) = V(\varepsilon),\\
\frac{\varepsilon M(\varepsilon)^2 V(\varepsilon)^3 \eta T^2}{\delta^2} 
=
\frac{\varepsilon^2 T M(\varepsilon)^2 V(\varepsilon)}{\eta^3}
&\Leftrightarrow
\eta = \frac{\varepsilon^{\frac{1}{4}} \delta ^{\frac{1}{2}}}{V(\varepsilon)^{\frac{1}{2}} T^{\frac{1}{4}}},\label{eta}
\\
\frac{c^2T^2}{M(\varepsilon)}
=
\varepsilon M(\varepsilon)^2 T^2 V(\varepsilon)^2 \delta 
&\Leftrightarrow
\delta = \frac{c^2}{V(\varepsilon)^2 \varepsilon}.\label{delta}
\end{align}
Then by \eqref{M}, \eqref{eta} and \eqref{delta} we obtain
\begin{align}
\frac{c^2T^2}{M(\varepsilon)} 
=
\frac{M(\varepsilon)^2 T V(\varepsilon) \varepsilon^2}{\eta^3}
&\Leftrightarrow
c^2 T^2 \eta^3
=
M(\varepsilon)^3 T V(\varepsilon)\varepsilon^2 \nonumber
\\
&\Leftrightarrow
V(\varepsilon)
=
c^{\frac{5}{13}} T^{\frac{1}{52}} \varepsilon^{-\frac{11}{52}}. \label{V}
\end{align}
Now, we want to make all the terms in \eqref{balance} small. We observe that, by the choices of $M(\varepsilon)$, $V(\varepsilon)$, $\eta$ and $\delta$ as above, all the terms in \eqref{balance} are equal to $$\frac{c^2 T^2}{M(\varepsilon)}= \frac{c^2 T^2 }{V(\varepsilon)}=c^{\frac{21}{13}}T^{\frac{103}{52}} \varepsilon^{\frac{11}{52}}.$$
Thus we are equating this term with $\varepsilon^{\alpha}$ and we aim to find the values of this $\alpha$ in order for $c^{\frac{84}{103}}T$ to be large for small $\varepsilon$. That is
\begin{align} \label{1}
c^{\frac{21}{13}}T^{\frac{103}{52}} \varepsilon^{\frac{11}{52}} =
\varepsilon^{\alpha}   \Leftrightarrow
c^{\frac{84}{103}}T = \varepsilon^{\frac{52\alpha}{103} -\frac{11}{103}}.
\end{align}
Hence, the values of $\alpha$ that satisfy \eqref{1} are between
\begin{align*}
 \frac{52\alpha}{103} < \frac{11}{103}  
 \Leftrightarrow  
 0 < \alpha < \frac{11}{52}.
\end{align*}

\end{proof}
Now, we have all the tools we need to give the proof of the main theorem, Theorem \ref{thm:1.1.}.
\begin{proof}[Proof of Theorem \ref{thm:1.1.}] By the proof of \cite[Theorem 2.3]{Matthies2018} and Theorem \ref{ThP} above, we get that for any $t\in [0,T_{\varepsilon}]$ and for any measurable $\Omega \subset \mathds{T}^3 \times \mathds{R}^3$
 \begin{align*}
& \sup_{\Omega \subset \mathds{T}^3 \times \mathds{R}^3  } \big| \int_{\Omega} \hat{f}^N_t(x,v) - f_t (x,v) \, \d x \, \d v \big| =\sup_{\Omega \subset \mathds{T}^3 \times \mathds{R}^3  }|\hat{P}^{\varepsilon}_t (S_t(\Omega)) - P_t(S_t(\Omega)) | \\
&
\le
\frac{4}{3} \pi c \varepsilon
+C_1 C_2(1+M(\varepsilon)) T \varepsilon^2 (\beta + V(\varepsilon))^2 + M(\varepsilon) \varepsilon
+ C \varepsilon +  C M(\varepsilon)  \varepsilon^{3/2}\\ & \quad
+  \frac{1}{M(\varepsilon)} \left[
1+ c \pi \left( (\beta + E_0)t 
+ \frac{1}{2 \sqrt{2}} (M_g + \frac{C_g}{E_0}) c t^2 \right)
 \right]\\
& \quad+
C M(\varepsilon) \left[ 1+ \frac{\eta M(\varepsilon)V(\varepsilon)}{\delta^2} + M(\varepsilon)\delta \right]\varepsilon (T V(\varepsilon))^2 + C (M(\varepsilon))^2 TV(\varepsilon) \left(\frac{1}{\eta } \right) \left( \frac{\varepsilon}{\eta}\right)^2
\\ &
\quad +
C_g  \frac{ M(\varepsilon) }{V(\varepsilon)^3} + \frac{M_{f_0}}{V(\varepsilon)^2}  + \frac{ M(\varepsilon)}{V(\varepsilon)^2}
\left(1+ E_0^2 + (M_gE_0 +C_g)ct +\left(M_g+ \frac{C_g}{E_0}\right)^2 \frac{c^2 t^2}{2}\right)\\
& \le C \varepsilon ^{\alpha},
 \end{align*}
for the choices of $c^{\frac{84}{103}}T = \varepsilon^{\frac{52\alpha}{103} -\frac{11}{103}}$, $V(\varepsilon)= M(\varepsilon) = c^{\frac{5}{13}} T^{\frac{1}{52}} \varepsilon^{-\frac{11}{52}}$ and $\eta$, $\delta$ as in \eqref{eta}, \eqref{delta} with $ 0< \alpha < \frac{11}{52}$.
This concludes the proof of Theorem $\ref{thm:1.1.}$, since
for any $t\in [0,T_{\varepsilon}]$
\begin{align*}
\| \hat{f}^N_t - f_t \|_{L^1(\mathds{T}^3 \times \mathds{R}^3)}
&= 2 \int_{f_t^N \ge f_t}
\hat{f}^N_t (x,v) - f_t(x,v)\, \d x\, \d v \\
&\le
 \sup_{\Omega \subset \mathds{T}^3 \times \mathds{R}^3  }
\big| \int_{\Omega} \hat{f}^N_t(x,v) - f_t (x,v)\, \d x\, \d v \big| \le C \varepsilon ^{\alpha},
\end{align*}
as required.
\end{proof}

\section{From Linear Boltzmann to the  Heat equation}

In the next step, we rescale the derived linear Boltzmann equations to obtain a diffusion equation as a scaling limit. There are several suitable function spaces to study linear Boltzmann equations. In the previous section, we followed \cite{Matthies2018} and used $L^1$ theory as in \cite{arlotti06}. The use of $L^2$ theories have been useful for decay estimates, see e.g. \cite{Bisi2015,Canizio2018}.
By \cite[Section 4] {Gallagher2019} we know that, as soon as the initial data of the linear Boltzmann equation \eqref{LB} belongs to $L^{\infty}$, then there is a unique global solution to $\eqref{LB}$. Then, since $f_t(x,v)$ is a solution of the linear Boltzmann equation $\eqref{1.1}$, by uniqueness of solution we get that $f_t = \phi_t M_{\beta}$.

The limit from the mesoscopic to the macroscopic description consists in taking the hydrodynamic limit $c\to \infty$. In order to do so, we rescale the linear Boltzmann equation $\eqref{1.1}$ and take equation $\eqref{LB}$. In the macroscopic limit, the trajectory of the tagged particle is defined by $\Xi (\tau) := x(c \tau) \in \mathds{T}^3$. The distribution of $\Xi (\tau)$ is given by $\hat{f}^N(c \tau, x, v)$.
In \cite[Section 6]{MR3455156} they are connecting the linear Boltzmann equation to the heat equation in the following sense.
 Let $f_0$ the initial distribution of the tagged particle in a background distributed according to a Maxwellian $M_{\beta}$. Consider $\rho_0$ a continuous density of probability on $\mathds{T}^3$. Let also $\phi_{c}$ be the solution of the linear Boltzmann equation
\begin{align*}
\partial_t \phi_{c} + v \cdot \nabla_x \phi_{c} &= - c  \mathcal{L} \phi_{c}\\
\mathcal{L}\phi_{c} := \int \int [\phi_{c} (v) - \phi_{c}(v')]& M_{\beta}(v_1) [(v - v_1)\cdot \nu]_{+} \, \d v_1 \, \d\nu  \numberthis \label{LB} \\
M_{\beta}(v) := (\frac{\beta}{2\pi})^{\frac{3}{2}} &\exp (- \frac{\beta}{2} |v|^2), \quad \beta >0,
\end{align*}
with initial data $\rho_0$.
Then for all $\tau \in [0,T]$
\begin{align}\label{29.}
    \| M_{\beta}(v) [\phi_{c}(c \tau, x, v) - \rho(\tau, x)]\|_{L^{\infty} ([0,T] \times \mathds{T}^3 \times \mathds{R}^3)} \to 0 \quad \mathrm{as} \ c \to \infty,
\end{align}
 where $\rho(\tau,x)$ is the solution of the linear heat equation
\begin{align*}
 \partial_{\tau} \rho - \kappa_{\beta} \Delta _x \rho &=0 \quad \quad \mathrm{in}\ \mathds{T}^3,\\
 \rho (0, \cdot)&= \rho_0,
\end{align*}
where the diffusion parameter $\kappa_{\beta}$ is given by
\begin{equation*}
\kappa_{\beta} := \frac{1}{3} \int_{\mathds{R}^3}
v \mathcal{L}^{-1}v M_{\beta}(v)
\, \d v,
\end{equation*}
where $\mathcal{L}$ is the Linear Boltzmann operator and $\mathcal{L}^{-1}$ is its pseudo-inverse defined on $(\mathrm{Ker} \mathcal{L})^{ \perp}$.
 The factor $3$ in the diffusion parameter comes from the dimension $d=3$. See, \cite[Section 6]{MR3455156} for a proof of this convergence. Now, we are ready to give the proof of Theorem $\ref{thm2}$.
 \begin{proof}[Proof of Theorem \ref{thm2}]
By the triangle inequality we have
\begin{align*}
 \| \hat{f}^N (c t,  x,v) - \rho(\tau,x) M_{\beta}(v) \|_{L^1 ([0,T] \times \mathds{T}^3 \times \mathds{R}^3)}
 &\le
 \| \hat{f}^N (c t,  x,v) - f (c t,  x,v)\|_{L^1 ([0,T] \times \mathds{T}^3 \times \mathds{R}^3)}\\
 & \quad+
 \| f (c t,  x,v) - \rho(\tau,x) M_{\beta}(v) \|_{L^1 ([0,T] \times \mathds{T}^3 \times \mathds{R}^3)}\\
 & =
 \| \hat{f}^N (c t,  x,v) - M_{\beta}(v) \phi_c (c t,  x,v)\|_{L^1 ([0,T] \times \mathds{T}^3 \times \mathds{R}^3)}\\
 &\quad+
 \| M_{\beta}(v) [ \phi_c (c t,  x,v) - \rho(\tau,x)] \|_{L^1 ([0,T] \times \mathds{T}^3 \times \mathds{R}^3)},
\end{align*}
where by Theorem \ref{thm:1.1.} we know that
\begin{align*}
 \| \hat{f}^N (c t,  x,v) - M_{\beta}(v) \phi_c (c t,  x,v)\|_{L^1 ([0,T] \times \mathds{T}^3 \times \mathds{R}^3)} \to 0  \quad \mathrm{as} \ N \to \infty.
\end{align*}
Furthermore, by $\eqref{29.}$ we know that
\begin{align*}
    \| M_{\beta}(v) [\phi_{c}(c \tau, x, v) - \rho(\tau, x)]\|_{L^{\infty} ([0,T] \times \mathds{T}^3 \times \mathds{R}^3)} \to 0 \quad \mathrm{as} \ c \to \infty.
\end{align*}
Then
\begin{align*}
   & \| M_{\beta}(v) [\phi_{c}(c \tau, x, v) - \rho(\tau, x)]\|_{L^1 ([0,T] \times \mathds{T}^3 \times \mathds{R}^3)} \\
&=
\int_{0}^{T} \int_{\mathds{T}^3 \times \mathds{R}^3} |M_{\beta}(v) [\phi_{c}(c \tau, x, v) - \rho(\tau, x)]  | \, \d x\, \d v \, \d \tau \\
& =
\int_{0}^{T} \int_{\mathds{T}^3 \times B_R} |M_{\beta}(v) [\phi_{c}(c \tau, x, v) - \rho(\tau, x)]  | \, \d x\, \d v \, \d \tau \\ & \quad+
\int_{0}^{T} \int_{\mathds{T}^3 \times (\mathds{R}^3 \setminus B_R) } |M_{\beta}(v) [\phi_{c}(c \tau, x, v) - \rho(\tau, x)]  | \, \d x\, \d v \, \d \tau,
\end{align*}
where
\begin{align*}
\int_{0}^{T} \int_{\mathds{T}^3 \times B_R} |M_{\beta}(v) [\phi_{c}(c \tau, x, v) &- \rho(\tau, x)]  | \, \d x\, \d v \, \d \tau \\
&\le
\| M_{\beta}(v) [\phi_{c}(c \tau, x, v) - \rho(\tau, x) \|_{L^{\infty}} \int_{[0,T] \times \mathds{T}^3 \times \mathds{R}^3}
1 \, \d x\, \d v \, \d \tau \\
& \le
C \| M_{\beta}(v) [\phi_{c}(c \tau, x, v) - \rho(\tau, x) \|_{L^{\infty}}.\numberthis \label{29}
\end{align*}
and
\begin{align}\label{30}
\int_{0}^{T} \int_{\mathds{T}^3 \times (\mathds{R}^3 \setminus B_R)} |M_{\beta}(v) [\phi_{c}(c \tau, x, v) &- \rho(\tau, x)]  | \, \d x\, \d v \, \d \tau
< \delta,
\end{align}
by making $R$ large enough as $ M_{\beta} [ \phi_c - \rho]$ has velocity moments.
Thus, by equations \eqref{29} and \eqref{30} we get
\begin{align*}
    \| M_{\beta}(v) [\phi_{c}(c \tau, x, v) - \rho(\tau, x)]\|_{L^1 ([0,T] \times \mathds{T}^3 \times \mathds{R}^3)} \to 0 \quad \mathrm{as}\  c \to \infty.
    \end{align*}
Therefore
\begin{align*}
 \| \hat{f}^N (c t,  x,v) - \rho(\tau,x) M_{\beta}(v) \|_{L^1 ([0,T] \times \mathds{T}^3 \times \mathds{R}^3)} \to 0  \quad \mathrm{as} \ c \to \infty,
 \end{align*}
 which concludes the proof of Theorem $ \ref{thm2}$.

 \end{proof}

 \section*{Acknowledgements}
This work was supported through  The Leverhulme Trust  RPG-2020-107.


\begin{thebibliography}{BGSRS23b}

\bibitem[BA06]{arlotti06}
Jacek Banasiak and Luisa Arlotti.
\newblock {\em Perturbations of positive semigroups with applications}.
\newblock Springer Monographs in Mathematics. Springer-Verlag London, Ltd.,
  London, 2006.

\bibitem[BCnL15]{Bisi2015}
Marzia Bisi, Jos\'{e}~A. Ca\~{n}izo, and Bertrand Lods.
\newblock Entropy dissipation estimates for the linear {B}oltzmann operator.
\newblock {\em J. Funct. Anal.}, 269(4):1028--1069, 2015.

\bibitem[BGSR16]{MR3455156}
Thierry Bodineau, Isabelle Gallagher, and Laure Saint-Raymond.
\newblock The {B}rownian motion as the limit of a deterministic system of
  hard-spheres.
\newblock {\em Invent. Math.}, 203(2):493--553, 2016.

\bibitem[BGSR18]{Bodineau2018}
Thierry Bodineau, Isabelle Gallagher, and Laure Saint-Raymond.
\newblock Derivation of an {O}rnstein-{U}hlenbeck process for a massive
  particle in a rarified gas of particles.
\newblock {\em Ann. Henri Poincar\'{e}}, 19(6):1647--1709, 2018.

\bibitem[BGSRS22]{Bodineau2022}
Thierry Bodineau, Isabelle Gallagher, Laure Saint-Raymond, and Sergio
  Simonella.
\newblock Cluster expansion for a dilute hard sphere gas dynamics.
\newblock {\em J. Math. Phys.}, 63(7):Paper No. 073301, 26, 2022.

\bibitem[BGSRS23a]{Bodineau2023a}
Thierry Bodineau, Isabelle Gallagher, Laure Saint-Raymond, and Simonella
  Sergio.
\newblock Statistical dynamics of a hard sphere gas: fluctuating {B}oltzmann
  equation and large deviations.
\newblock {\em Ann. of Math. (2)}, 198(3):1047--1201, 2023.

\bibitem[BGSRS23b]{Bodineau2023}
Thierry Bodineau, Isabelle Gallagher, Laure Saint-Raymond, and Sergio
  Simonella.
\newblock Long-time correlations for a hard-sphere gas at equilibrium.
\newblock {\em Comm. Pure Appl. Math.}, 76(12):3852--3911, 2023.

\bibitem[Cer88]{cerci88}
Carlo Cercignani.
\newblock {\em The {B}oltzmann equation and its applications}, volume~67 of
  {\em Applied Mathematical Sciences}.
\newblock Springer-Verlag, New York, 1988.

\bibitem[CIP94]{cercignani94}
Carlo Cercignani, Reinhard Illner, and Mario Pulvirenti.
\newblock {\em The mathematical theory of dilute gases}, volume 106 of {\em
  Applied Mathematical Sciences}.
\newblock Springer-Verlag, 1994.

\bibitem[CnEL18]{Canizio2018}
Jos\'{e}~A. Ca\~{n}izo, Amit Einav, and Bertrand Lods.
\newblock On the rate of convergence to equilibrium for the linear {B}oltzmann
  equation with soft potentials.
\newblock {\em J. Math. Anal. Appl.}, 462(1):801--839, 2018.

\bibitem[DH21]{Deng2021}
Yu~Deng and Zaher Hani.
\newblock On the derivation of the wave kinetic equation for {NLS}.
\newblock {\em Forum Math. Pi}, 9:Paper No. e6, 37, 2021.

\bibitem[DH23a]{Deng2023a}
Yu~Deng and Zaher Hani.
\newblock Full derivation of the wave kinetic equation.
\newblock {\em Invent. Math.}, 233(2):543--724, 2023.

\bibitem[DH23b]{Deng2023}
Yu~Deng and Zaher Hani.
\newblock Long time justification of wave turbulence theory, November 2023.
\newblock arXiv:2311.10082.

\bibitem[Fou24]{fougères2024derivation}
Florent Thomas Fougères.
\newblock On the derivation of the linear {B}oltzmann equation from the nonideal {R}ayleigh gas, 2024. 
\newblock  arXiv:2404.03266v1.





\bibitem[Gal19]{Gallagher2019}
Isabelle Gallagher.
\newblock From {N}ewton to {N}avier-{S}tokes, or {H}ow to connect fluid
  mechanics equations from microscopic to macroscopic scales.
\newblock {\em Bull. Amer. Math. Soc. (N.S.)}, 56(1):65--85, 2019.

\bibitem[GSRT13]{Gallagher2013}
Isabelle Gallagher, Laure Saint-Raymond, and Benjamin Texier.
\newblock {\em From {N}ewton to {B}oltzmann: hard spheres and short-range
  potentials}.
\newblock Zurich Lectures in Advanced Mathematics. European mathematical
  society, 2013.

\bibitem[IP86]{Illner1986}
Reinhard Illner and Mario Pulvirenti.
\newblock Global validity of the {B}oltzmann equation for a two-dimensional
  rare gas in vacuum.
\newblock {\em Comm. Math. Phys.}, 105(2):189--203, 1986.

\bibitem[IP89]{Illner1989}
R.~Illner and M.~Pulvirenti.
\newblock Global validity of the {B}oltzmann equation for two- and
  three-dimensional rare gas in vacuum. {E}rratum and improved result.
\newblock {\em Comm. Math. Phys.}, 121(1):143--146, 1989.

\bibitem[Kin75]{King1975}
Francis~Gordon King.
\newblock {\em {BBGKY} hierarchy for positive potentials}.
\newblock ProQuest LLC, Ann Arbor, MI, 1975.
\newblock Thesis (Ph.D.)--University of California, Berkeley.

\bibitem[Lan75]{lanford75}
Oscar~E. Lanford.
\newblock {\em Dynamical Systems, Theory and Applications: Battelle Seattle
  1974 Rencontres}, volume~38 of {\em Lecture notes in Physics}, chapter Time
  evolution of large classical systems, pages 1--111.
\newblock Springer Berlin Heidelberg, Berlin, Heidelberg, 1975.

\bibitem[LS11]{Lukkarinen2011}
Jani Lukkarinen and Herbert Spohn.
\newblock Weakly nonlinear {Schrödinger} equation with random initial data.
\newblock {\em Inventiones mathematicae}, 183(1):79--188, January 2011.

\bibitem[LT20]{Lutsko2020}
Christopher Lutsko and B\'{a}lint T\'{o}th.
\newblock Invariance principle for the random {L}orentz gas---beyond the
  {B}oltzmann-{G}rad limit.
\newblock {\em Comm. Math. Phys.}, 379(2):589--632, 2020.

\bibitem[MS18]{Matthies2018a}
Karsten Matthies and George Stone.
\newblock Derivation of a non-autonomous linear {Boltzmann} equation from a
  heterogeneous {Rayleigh} gas.
\newblock {\em Discrete \& Continuous Dynamical Systems - A}, 38(7):3299, 2018.

\bibitem[MST18]{Matthies2018}
Karsten Matthies, George Stone, and Florian Theil.
\newblock The derivation of the linear {B}oltzmann equation from a {Rayleigh}
  gas particle model.
\newblock {\em Kinetic and Related Models}, 2018.

\bibitem[MT10]{matthies10}
Karsten Matthies and Florian Theil.
\newblock Validity and failure of the {B}oltzmann approximation of kinetic
  annihilation.
\newblock {\em Journal of nonlinear science}, 20(1):1--46, 2010.

\bibitem[MT12]{matt12}
Karsten Matthies and Florian Theil.
\newblock A semigroup approach to the justification of kinetic theory.
\newblock {\em SIAM Journal on Mathematical Analysis}, 44(6):4345--4379, 2012.

\bibitem[NVW21]{Nota2021}
Alessia Nota, Juan J.~L. Velázquez, and Raphael Winter.
\newblock Interacting particle systems with long-range interactions: scaling
  limits and kinetic equations.
\newblock {\em Rendiconti Lincei}, 32(2):335--377, July 2021.

\bibitem[NWL19]{Nota2019}
Alessia Nota, Raphael Winter, and Bertrand Lods.
\newblock Kinetic description of a {R}ayleigh gas with annihilation.
\newblock {\em J. Stat. Phys.}, 176(6):1434--1462, 2019.

\bibitem[PSS14]{pul13}
Mario Pulvirenti, Chiara Saffirio, and Sergio Simonella.
\newblock On the validity of the {B}oltzmann equation for short range
  potentials.
\newblock {\em Reviews in Mathematical Physics}, 26(02):1450001, 64, 2014.

\bibitem[Sle13]{Slemrod2013}
M.~Slemrod.
\newblock From {B}oltzmann to {E}uler: {H}ilbert's 6th problem revisited.
\newblock {\em Comput. Math. Appl.}, 65(10):1497--1501, 2013.

\bibitem[Spo78]{spohn78}
Herbert Spohn.
\newblock The {L}orentz process converges to a random flight process.
\newblock {\em Comm. Math. Phys.}, 60(3):277--290, 1978.

\bibitem[Sto17]{Stone2017}
George Stone.
\newblock {\em Derivation of Kinetic Equations from Particle Models}.
\newblock PhD thesis, University of Bath, 2017.

\end{thebibliography}



\end{document}